\pgfplotsset{compat=1.18}
\newtheorem{mainthm}{Theorem}
\newtheorem{maincor}[mainthm]{Corollary}
\newtheorem{theorem}{Theorem}[section]
\newtheorem{lemma}[theorem]{Lemma}
\newtheorem{prop}[theorem]{Proposition}
\newtheorem*{claim}{Claim}
\newtheorem{cor}[theorem]{Corollary}
\newtheorem{remark}[theorem]{Remark}
\theoremstyle{definition}
\newtheorem{deff}[theorem]{Definition}
\newtheoremstyle{break}
  {\topsep}{\topsep}%
  {\itshape}{}%
  {\bfseries}{}%
  {\newline}{}%
\theoremstyle{break}
\numberwithin{equation}{section}
\newskip\aline \newskip\halfaline
\newcommand\bC{{\mathbb C}}
\newcommand{\bP}{{{\mathbb P}}}
\newcommand\bR{{\mathbb R}}
\newcommand\bS{{\mathbb S}}
\newcommand\bZ{{\mathbb Z}}
\DeclareMathOperator{\vol}{vol}
\newcommand{\eps}{{\epsilon}}
\def\inpr{\mathbin{\hbox to 6pt{\vrule height0.4pt width5pt depth0pt \kern-.4pt \vrule height6pt width0.4pt depth0pt\hss}}}
\newcommand{\Scal}{R}
\newcommand{\Ric}{\mathrm{Ric}}
\newcommand{\cA}{\mathcal{A}}
\newcommand{\Lip}{\mathrm{Lip}}
\newcommand{\cH}{\mathcal{H}}
\def\XXint#1#2#3{{\setbox0=\hbox{$#1{#2#3}{\int}$ }
\vcenter{\hbox{$#2#3$ }}\kern-.59\wd0}}
\title{Positive curvature conditions on contractible manifolds}
\author[Paul Sweeney Jr.]{Paul Sweeney Jr.}
\address{Paul Sweeney Jr., Universit\`a di Trento, Dipartimento di Matematica, via Sommarive 14, 38123 Povo di Trento, Italy}
\email{paul.sweeneyjr@unitn.it}
\begin{document}
\begin{abstract}
     Our goal is to identify curvature conditions that distinguish Euclidean space in the case of open, contractible manifolds and the disk in the case of compact, contractible manifolds with boundary. First, we show that an open manifold that is the interior of a sufficiently connected, compact, contractible 5-manifold with boundary and supports a complete Riemannian metric with uniformly positive scalar curvature is diffeomorphic to Euclidean 5-space. Next, we investigate the analogous question for compact manifolds with boundary: Must a compact, contractible manifold that supports a Riemannian metric with positive scalar curvature and mean convex boundary necessarily be the disk? We present examples demonstrating that this curvature condition alone cannot distinguish the disk; on the other hand, we exhibit stronger curvature conditions that allow us to draw such a conclusion.
    
\end{abstract}
\maketitle

\section{Introduction}
A classical theme in Riemannian geometry is that positive curvature imposes constraints on the topology of a manifold\footnote{All manifolds are assumed to be smooth and connected.}. Perhaps the best-known example of this interaction between geometry and topology is the Gauss--Bonnet theorem, which implies that the only compact 2-manifold with boundary that supports a Riemannian metric with positive (scalar) curvature such that the boundary has positive geodesic curvature is the 2-disk. Analogously, by the work of Cohn-Vossen \cite{CohnVossen} and Huber \cite{HuberOnsubharmonic}, we know that the only open\footnote{An open manifold is a non-compact manifold without boundary.} 2-manifold supporting a {complete} Riemannian metric with positive (scalar) curvature is $\bR^2.$ 

In higher dimensions, scalar curvature differs from other notions of curvature, and positive scalar curvature becomes a relatively weak condition; nonetheless, it can still provide significant topological information. Therefore, we consider the following natural questions:
\begin{enumerate}[label=\Roman*.]
    \item Let $M^{n+1}$ be an open manifold which supports a \emph{complete} Riemannian metric with positive scalar curvature. Is $M$ homeomorphic (or diffeomorphic) to the standard $\bR^{n+1}$?\label{Question1}
    \item Let $X^{n+1}$ be a compact manifold with boundary which supports a Riemannian metric with positive scalar curvature and mean convex boundary. Is $X$ homeomorphic (or diffeomorphic) to the standard $(n+1)$-disk?\label{Question2}
\end{enumerate}

In dimension three, there are partial results to \hyperref[Question1]{Question I}.  Chang, Weinberger, and Yu \cite{ChangWeinbergerYuTaming} proved that an open, \emph{contractible} 3-manifold with a complete Riemannian metric of uniformly positive scalar curvature must be diffeomorphic to $\bR^3$. More recently, Chodosh--Lai--Xu \cite{ChodoshLaiXu3Manifolds} showed an open, \emph{contractible} Riemannian 3-manifold supporting a complete Riemannian metric with nonnegateve scalar curvature and bounded geometry\footnote{A Riemannian manifold has bounded geometry if there exists a constant $\Lambda>0$ such that $|\mathrm{Riem}|\leq \Lambda$ and $\mathrm{inj}\geq \Lambda^{-1}.$} is diffeomorphic to $\bR^3$ (see also Wang \cite{WangContractibleI, WangContractibleII}). We note that $\bR^n$ does indeed support a Riemannian metric with uniformly positive scalar curvature, for instance, by smoothly capping off a round half-cylinder $[0, \infty) \times \bS^{n-1}$ with a round $n$-hemisphere. 

Furthermore, there has been partial progress on Yau's fundamental question \cite[Problem 27]{YauProblem} of classifying 3-manifolds supporting complete Riemannian metrics with positive scalar curvature. Gromov \cite{GromovFourLectures} and Wang \cite{WanguniformlyPSC3}, independently, showed an open 3-manifold supports a complete Riemannian metric with \emph{uniformly} positive scalar curvature if and only if it is an infinite connected sum of manifolds of the form $\bS^3/\Gamma_j$ and $\bS^2 \times \bS^1$, where $\Gamma_j$ is a finite
subgroup of $\mathrm{SO}(4)$ acting freely on $\bS^3$. (See also Bessi\`eres--Besson--Maillot \cite{BessieresBessoMaillotRicciflow}, Bessi\`eres--Besson--Maillot--Marques \cite{BessieresBessoMaillotMarquesDeforming}, and Dong \cite{DongThreemanifolds}.) More recently, Balacheff--Gil Moreno de Mora Sard\`a--Sabourau \cite{BalacheffMorenodeMoraSardàSabourauComplete} have shown that one can weaken the uniform positive scalar curvature hypothesis in Gromov's and Wang's result to positive scalar curvature with at most quadratic decay at infinity with constant $C > 64\pi^2$ and still have the same topological classification.

In dimension four, there is also a partial answer to \hyperref[Question1]{Question I}, which is similar in spirit to the ones above. For open 4-manifolds, Chodosh, Máximo, and Mukherjee \cite{ChodoshMaximoMukherjeeComplete4} proved that if $M$ is the interior of a compact, contractible 4-manifold with boundary $X$, and $M$ supports a complete Riemannian metric of uniformly positive scalar curvature, then $M$ is homeomorphic to $\bR^4$. Moreover, if $X$ is a Mazur manifold\footnote{A Mazur manifold is a compact, contractible smooth 4-manifold with boundary admitting a handle body decomposition with one 0-handle, one 1-handle, and one 2-handle.}, then the homeomorphism can be promoted to a diffeomorphism.

We emphasize that the \textit{completeness} of the Riemannian metric is a crucial hypothesis in these results concerning open manifolds. Indeed, for every manifold $M$ that is diffeomorphic to the interior of a compact manifold with boundary, we know, by Gromov's h-principle \cite{GromovHprinciple}, that $M$ supports a (possibly incomplete) Riemannian metric with positive sectional curvature (see also Rosenberg \cite[Theorem 0.1 and Remark 0.2]{RosenbergManifolds}). This highlights that without the completeness assumption, no meaningful topological restrictions can be expected.

In this paper, we are able to affirmatively answer the following reformulation of \hyperref[Question1]{Question I}:  Can one find a (partial) characterization of open 5-manifolds supporting complete metrics of uniformly positive scalar curvature, analogous to the 4-dimensional result of Chodosh, Máximo, and Mukherjee? 

\begin{mainthm}\label{Scalar5}
    Let $M$ be the interior of a compact, contractible 5-manifold with boundary $X$, such that $\pi_3(X, \partial X) = 0$. If $M$ supports a complete Riemannian metric of uniformly positive scalar curvature, then $M$ is {diffeomorphic} to $\bR^5$.
\end{mainthm}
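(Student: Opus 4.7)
The plan is to reduce the theorem to the purely topological statement $\pi_1(\partial X) = 1$, and to establish that statement using the complete uniformly positive scalar curvature hypothesis. The reduction itself is standard: because $M$ is contractible and its end is diffeomorphic to $\partial X \times [0,\infty)$, an elementary end analysis shows that $M$ is simply connected at infinity if and only if $\pi_1(\partial X) = 1$. Once this holds, Stallings' engulfing theorem (valid for contractible $n$-manifolds in dimension $n \geq 5$ that are simply connected at infinity) supplies a homeomorphism $M \approx \bR^5$, and since $\bR^5$ admits a unique smooth structure (Kirby--Siebenmann smoothing theory, which is trivial for contractible manifolds in dimension $\neq 4$) this is automatically a diffeomorphism. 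So the entire proof reduces to proving $\pi_1(\partial X) = 1$.

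I would next translate the topological hypothesis. The long exact sequence of $(X,\partial X)$ combined with the contractibility of $X$ identifies $\pi_3(X,\partial X)$ with $\pi_2(\partial X)$, so the hypothesis becomes $\pi_2(\partial X) = 0$. Moreover, Alexander--Lefschetz duality says $\partial X$ is an integral homology $4$-sphere, so $\chi(\partial X) = 2$. These two facts already handle the case of \emph{finite} $\pi_1(\partial X)$: the universal cover $\widetilde{\partial X}$ is then a closed orientable $4$-manifold with $\pi_1 = \pi_2 = 0$, hence by Hurewicz and Poincar\'e duality a homology $4$-sphere with $\chi = 2$, and multiplicativity of the Euler characteristic in covers forces the cover to be trivial. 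The substance of the theorem is thus the case of \emph{infinite} $\pi_1(\partial X)$, which must be ruled out using the geometry.

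For the geometric argument, I would use a $\mu$-bubble/bandwidth estimate inside the end. The end contains arbitrarily long product slabs $\partial X \times [a, a+L]$ equipped with the restricted metric, of scalar curvature $\geq \kappa > 0$. Choosing a warping function $h$ on such a slab with $h^2 + 2|\nabla h| < \kappa$ and solving the associated prescribed-mean-curvature variational problem (following Gromov's bandwidth estimate, or its Chodosh--Li refinement) produces a stable $\mu$-bubble $\Sigma^4 \subset M$. The stability inequality transfers scalar-curvature-type information to $\Sigma$. To close the argument, I would use that infinite $\pi_1(\partial X)$ together with $\pi_2(\partial X) = 0$ yields, on an appropriate cover of $\Sigma$ or of the slab, a coarse topological obstruction inherited from the non-compact, $2$-connected universal cover $\widetilde{\partial X}$. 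That obstruction should be incompatible with the stability inequality once $L$ is large enough, contradicting the arbitrary width of slabs in the end.

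The hardest step is precisely this last one. ``$\pi_1$ infinite and $\pi_2 = 0$'' does not on its own obstruct PSC on a closed $4$-manifold (for example $S^1 \times S^3$ is PSC), so the argument must exploit that $\partial X$ is the boundary of a contractible $5$-manifold in an essential way, presumably by lifting to a cover where the bubble's boundary becomes enlargeable in a relative sense. This is the dimension-$5$ analog of the upgrade from Chang--Weinberger--Yu in dimension $3$ to Chodosh--M\'aximo--Mukherjee in dimension $4$, and I expect the technical heart of the paper to be the porting of their minimal-surface-based strategy to $5$-dimensional $\mu$-bubbles, with the hypothesis $\pi_3(X,\partial X) = 0$ providing exactly the topological input required to make the descent work.
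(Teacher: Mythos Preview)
Your topological reduction and your handling of the finite-$\pi_1(\partial X)$ case are correct and parallel the paper's (the paper goes via Freedman to get $\partial X\approx\bS^4$ and then Milnor's $h$-cobordism to get $X\approx D^5$, rather than via Stallings engulfing, but the endpoints agree). The gap is precisely where you flag it: the infinite-$\pi_1$ case. You correctly anticipate that a $\mu$-bubble $\Sigma^4$ in a long slab of the end admits a PSC metric, but your proposed closing step (``enlargeable in a relative sense'', ``porting Chodosh--M\'aximo--Mukherjee to $5$-dimensional bubbles'') is only speculation, and as you yourself note, PSC on a $4$-manifold does not by itself obstruct infinite $\pi_1$.

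The missing ingredient is not a new $5$-dimensional bubble technique but an existing black-box theorem of Chodosh--Li--Liokumovich: if a closed $4$-manifold $\Sigma$ carries PSC and a nonzero-degree map to a closed $4$-manifold $N$ with $\pi_2(N)=0$, then some \emph{finite} cover of $N$ is homotopy equivalent to $\bS^4$ or to $\#_k(\bS^3\times\bS^1)$. The paper uses the $\mu$-bubble only in the standard way, to produce a separating PSC hypersurface $\Sigma$ in the collar; separation gives a nonzero-degree projection $\Sigma\to\partial X$, and your observation $\pi_3(X,\partial X)=0\Rightarrow\pi_2(\partial X)=0$ is exactly the hypothesis needed to invoke the theorem with $N=\partial X$. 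The $\bS^4$ alternative forces $|\pi_1(\partial X)|<\infty$ and then $=1$ by your Euler-characteristic argument; the $\#_k(\bS^3\times\bS^1)$ alternative is ruled out by the same multiplicativity ($\chi=-2(k-1)$ versus $2d$, impossible for $d,k\geq1$). So the infinite-$\pi_1$ case is eliminated without ever being confronted directly, and the $5$-dimensional geometry reduces to a $4$-dimensional classification applied to the bubble.
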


\begin{remark}
     We note that the boundary of every compact, contractible manifold with boundary is a homology sphere.
     
     Kervaire \cite{KervaireSmooth} proved that every homology 4-sphere bounds a compact, contractible 5-manifold, and that every finitely presented perfect group of deficiency\footnote{The deficiency of a finitely presented group is the maximum of the difference between the number of generators and the number of relations, taken over all presentations.} zero is the fundamental group of an integral homology 4-sphere. For instance, the binary icosahedral group is a perfect group with deficiency zero. Hence, there exist compact, contractible 5-manifolds whose boundary is a non-trivial integral homology 4-sphere. Therefore, in any such case, the interior $M$ of $X$ is not $\bR^5$.
\end{remark}

 Next, we would like to consider \hyperref[Question2]{Question II}. First, we recall that interior curvature bounds alone are insufficient to characterize compact manifolds with boundary. Indeed, Gromov’s h-principle \cite{GromovHprinciple} guarantees that any compact manifold with boundary admits a Riemannian metric with positive sectional curvature. Therefore, we must impose an additional hypothesis on the boundary. 
 
 In dimension 3, Carlotto--Li \cite{CarlottoLiConstrained, CarlottoLiConstrainedII} classified the manifolds that satisfy the hypotheses of \hyperref[Question2]{Question II}. Specifically, they showed a connected, orientable, compact 3-manifold supports a Riemannian metric with positive scalar curvature and mean convex boundary if and only if it is the (interior) connected sum of the form $(\#_{i=1}^I P_{\gamma_i})\#(\#_{j=1}^J \bS^3/\Gamma_j)\# (\#_{k=1}^K  \bS^2\times \bS^1)$, where $\Gamma_j$ is a finite
subgroup of $\mathrm{SO}(4)$ acting freely on $\bS^3$ and $P_{\gamma_i}$ is a genus $\gamma_i$ handlebodies. Therefore, the 3-disk is the only compact, \emph{contractible} 3-manifold that supports a Riemannian metric with positive scalar curvature and mean convex boundary. (See also the work of Wu \cite{WuCapillary} for constraints on Riemannian 2- and 3-manifolds with boundary that have nonnegative scalar curvature and mean convex boundary.)

The situation differs, substantially, in higher dimensions and from the presentation concerning \hyperref[Question1]{Question I}. In part, this is due to a result of Lawson and Michelsohn \cite[\S 5]{LawsonMichelsohnEmbedding}, which implies the following:
\begin{prop}\label{Example1}
    Let $X^{n+1}$, $n\geq 2$, be a compact, contractible $(n+1)$-manifold with boundary. If $n=3$, additionally assume that $X$ is a Mazur manifold. Then $X$ supports a Riemannian metric with positive scalar curvature and mean convex boundary.
\end{prop}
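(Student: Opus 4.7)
The plan is to construct the desired Riemannian metric on $X$ inductively along a handle decomposition, starting from the standard disk and using the Lawson--Michelsohn surgery-with-boundary technique to carry positive scalar curvature and mean convexity through each handle attachment.

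First, I would fix an appropriate handle decomposition of $X$. For $n=2$, $X$ is the closed 3-ball (by the resolution of the Poincar\'e conjecture in dimension three), so the round hemisphere metric on $D^3$ already works. For $n\geq 4$, the dimension of $X$ is at least five, so the machinery of the $h$-cobordism theorem together with Smale-type handle cancellation --- available because $X$ is simply connected --- lets me arrange a handle decomposition with one 0-handle and further handles whose indices lie in the range where the Lawson--Michelsohn surgery argument applies (i.e.\ codimension at least three in $X$, equivalently index $\leq n-2$). For $n=3$, the Mazur hypothesis directly gives an explicit decomposition with one 0-, one 1-, and one 2-handle.

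Next, I would take as base the round hemisphere metric on $D^{n+1}$, which has constant positive scalar curvature and totally geodesic (hence mean convex) boundary, and attach the handles of $X$ one by one. At each stage I would invoke the theorem in $\S 5$ of \cite{LawsonMichelsohnEmbedding}: in the appropriate codimension range, attaching a handle to a compact Riemannian manifold with positive scalar curvature and mean convex boundary yields a new manifold carrying a metric with the same two properties. After finitely many attachments one obtains the required metric on $X$.

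The main obstacle is the $n=3$ (Mazur) case: a 2-handle in a 4-manifold is attached along a circle in the 3-dimensional boundary and thus has codimension only two, which lies at the edge of the Gromov--Lawson surgery technique. It is precisely this borderline case for which the specialized explicit construction of Lawson--Michelsohn in $\S 5$ is needed, and the restriction to Mazur manifolds is what guarantees that the decomposition of $X$ avoids any handle the method cannot treat. In all other dimensions $n\geq 2$ the obstacle is absent either because $X$ is already a disk ($n=2$) or because the handle cancellation arguments available in dimension $\geq 5$ reduce every required attachment to the standard codimension $\geq 3$ regime.
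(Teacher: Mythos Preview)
Your overall strategy---build the metric inductively along a handle decomposition using the Lawson--Michelsohn construction---is in the right spirit and is essentially how Lawson--Michelsohn themselves prove the theorem the paper invokes. But there is an off-by-one error that creates a genuine gap. You assert that the handle-attachment step requires codimension $\geq 3$ in $X$ (handle index $\leq n-2$) and then claim that Smale-type cancellation produces such a decomposition for $n\geq 4$. The second claim is false in general: eliminating handles of index $n-1$ amounts, dually, to eliminating $2$-handles in the decomposition built from $\partial X$, and that requires $\pi_2(X,\partial X)=0$. For contractible $X$ one has $\pi_2(X,\partial X)\cong\pi_1(\partial X)$, which is typically a nontrivial perfect group. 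So a decomposition with all indices $\leq n-2$ is unavailable unless $\partial X$ is simply connected---in which case $X$ is already the disk and there is nothing to prove.

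What actually works is that Lawson--Michelsohn embed $X$ as a codimension-zero submanifold of $\bS^{n+1}$ with mean convex boundary (so one in fact gets $\sec\equiv 1$, not merely positive scalar curvature), and their attach-a-handle-inside-the-sphere step goes through for handles of index $\leq n-1$, i.e.\ codimension $\geq 2$. A decomposition with index $\leq n-1$ \emph{is} available for $n\geq 4$ from $\pi_1(X,\partial X)=0$ alone (trade away the dual $1$-handles). This also corrects your diagnosis of the $n=3$ obstacle: the $2$-handle in a $4$-manifold has index $2=n-1$ and is perfectly fine for the embedding construction; the real issue is that Smale cancellation fails in dimension $4$, so one cannot in general remove $3$-handles, and the Mazur hypothesis is precisely what supplies a decomposition without them. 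The paper's proof simply checks the hypotheses of Lawson--Michelsohn's theorem directly---parallelizability (automatic for contractible $X$) and $\pi_1(X,\partial X)=0$---and quotes the result, noting that for $n=3$ the Mazur handle structure substitutes for the cancellation step inside their argument.
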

 
\begin{remark}
    By Kervaire \cite{KervaireSmooth}, for any oriented integral homology $n$-sphere $\Sigma^n$ with $n\geq 5$, there exists a unique smooth homotopy sphere $S_M^n$ such that $\Sigma^n\# S_M^n$  bounds a compact, contractible manifold. Furthermore, for every finitely presented superperfect group\footnote{A group $G$ is superperfect if its first two homology groups are trivial.} $G$ there exists an integral homology whose fundamental group is $G$. For example, the binary icosahedral group is a finitely presented superperfect group. Moreover, when the smooth generalized Poincar\'e conjecture holds (such as in dimension 5), we know that $S_M^n$ is diffeomorphic to the standard $\bS^n$ and so $\Sigma^n$ is diffeomorphic to $\Sigma^n\# S_M^n$. Therefore, by combining the work of Kervaire with \Cref{Example1}, we can find a plethora of examples of compact, contractible manifolds that support positive scalar curvature and mean convex boundary.
\end{remark}

In fact, the manifolds from \Cref{Example1} support metrics with even stronger curvature conditions. By Lawson and Michelsohn’s result, each such manifold supports a Riemannian metric with  \emph{constant sectional curvature (equal to one)} and mean convex boundary. Furthermore, applying the work of B\"ar--Hanke \cite[Theorem 3.7]{BarHankeBoundary}  to \Cref{Example1} shows that the manifolds from \Cref{Example1} support Riemannian metrics with positive scalar curvature and \emph{convex boundary}. Therefore, to obtain a characterization of the standard disk among compact, contractible manifolds with boundary, it is necessary to impose significantly stronger geometric assumptions.

With \emph{very strong} curvature conditions, there have been affirmative results concerning \hyperref[Question2]{Question~II}; the first of which is known as the Soul Theorem. Gromoll and Meyer \cite{GromollMeyer} (see also Cheeger--Gromoll \cite{CheegerGromollSoul}, Poor \cite{PoorSomeresults}, and Perelman \cite{PerelmanProofofSoul}) showed that the only compact $n$-manifold with boundary that supports a Riemannian metric with positive sectional curvature and convex boundary is the $n$-disk. Sha proved an analogous result for positive sectional curvature and $p$-convex boundary \cite{Shapconvex, ShaHandlebodiespconvex}. An even stronger result exists in dimension three: the only compact manifold with boundary that supports a Riemannian metric with positive Ricci curvature and convex boundary is the $3$-disk. This follows from a variational argument developed in Meeks, Simon, and Yau \cite{MeeksSimonYauEmbedding} (see Fraser--Li \cite[Theorem 2.11]{FraserLicompact3}).

Therefore, in the present work, we aim to investigate \hyperref[Question2]{Question~II} under curvature conditions that are stronger than the combination of positive scalar curvature on the interior and mean convexity on the boundary, yet weaker than positive sectional curvature on the interior and convexity on the boundary, with the goal of identifying conditions that distinguish the disk.

 An interior curvature condition stronger than positive scalar curvature is positive isotropic curvature (PIC). We say the that a Riemannian manifold $(M^n,g)$, $n\geq 4$, has positive isotropic curvature if $R_{1313}+R_{1414}+R_{2323}+R_{2424}-2R_{1234}>0$ for all orthonormal four-frames $\{e_1,e_2,e_3,e_4\}$. PIC was first studied by Micallef and Moore \cite{MicallefMooreMinimal}.
 
\begin{remark}
We observe that for a Riemannian manifold $(M^n, g)$ with $n \geq 4$, the condition of PIC is \emph{incomparable} with the condition of positive Ricci curvature; neither implies the other, in general.

This can be seen by noting that for any $n \geq 4$, the Riemannian manifold $(\bS^1 \times \bS^n, g_{\bS^1} \oplus g_{\bS^n})$, where $g_{\bS^m}$ denotes the standard metric of radius one on $\bS^m$, has PIC. However, by the Bonnet--Myers theorem, $\bS^1\times\bS^n$ does not support a Riemannian metric with positive Ricci curvature since its fundamental group is infinite. Moreover, for $p,q\geq 2$, $(\bS^p\times\bS^q, g_{\bS^p}\oplus g_{\bS^{q}})$ has positive Ricci curvature (and is Einstein when $p=q$). However, $\bS^p\times\bS^q$ does not support a Riemannian metric with PIC by Micallef--Moore \cite[Main Theorem]{MicallefMooreMinimal}. (See Brendle \cite[Section 7.7]{BrendleRicciFlowTextBook} for a detailed comparison of positive curvature conditions.)
\end{remark}

We now define two curvature conditions that can distinguish the disk among compact, contractible manifolds with boundary.

\begin{enumerate}[label=(C\arabic*)]
    \item A Riemannian metric $g$ on a $(n+1)$-manifold with boundary satisfies curvature condition \ref{C1} if $g$ has PIC and the boundary is 2-convex. \label{C1}
    \item A Riemannian metric $g$ on a $(n+1)$-manifold with boundary satisfies curvature condition \ref{C2} if $g$ satisfies $ng\leq \Ric\leq \frac{1}{2}n(n+1)g$ and the boundary is convex. \label{C2}
\end{enumerate}

\begin{mainthm}\label{T: CurvManwithBoundary}
    Let $X^{n+1}$ be a compact, contractible $(n+1)$-manifold with boundary such that one of the following two conditions holds.
    \begin{enumerate}[label=\emph{(\roman*)}]
        \item $n=4$ or $n\geq 12$ and $X$ supports a Riemannian metric $g$ satisfying \emph{\ref{C1}}. \label{B(i)}
        \item $n\in\{3,4\}$ and $X$ supports a Riemannian metric $g$ satisfying \emph{\ref{C2}}. Furthermore, if $n=4$, assume $\pi_3(X,\partial X)=0$.\label{B(ii)}
    \end{enumerate}
    Then $X$ is homeomorphic to the $(n+1)$-disk.
\end{mainthm}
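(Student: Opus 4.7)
The plan is to pass from $X$ to the closed double $DX = X \cup_{\partial X} X$, to smooth the resulting $C^{1,1}$ metric across $\partial X$ while preserving the relevant interior curvature condition, to apply a known classification of closed $(n+1)$-manifolds carrying that condition, and finally to deduce $X \cong D^{n+1}$ from the homeomorphism type of $DX$ via the natural $\bZ/2$-involution $\tau\colon DX\to DX$ that swaps the two halves. Throughout, the contractibility of $X$ together with van Kampen gives $\pi_1(DX) = \pi_1(X) *_{\pi_1(\partial X)} \pi_1(X) = 1$, so $DX$ is simply connected.

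Under \emph{(C1)}, the $2$-convexity of $\partial X$ is the correct boundary companion to PIC: the distributional curvature contribution along $\partial X$ in the doubled metric is PIC-nonnegative, and a standard mollification (combined if necessary with a short-time Ricci flow) should produce a genuinely smooth metric on $DX$ with PIC. The classification of closed PIC $(n+1)$-manifolds via Ricci flow with surgery---the dimension-$5$ framework covering the case $n=4$, and the Chen--Tang--Zhu classification in dimensions $\geq 13$ covering $n\geq 12$ (this is exactly the dimensional restriction in the hypothesis)---then forces $DX$ to be a finite connected sum of spherical space forms $\bS^{n+1}/\Gamma_j$ and copies of $\bS^n\times\bS^1$. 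Simple connectivity kills all the $\bS^n\times\bS^1$ summands and trivializes each $\Gamma_j$, so $DX\cong \bS^{n+1}$ (a genuine homeomorphism since $n+1\geq 5$).

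Under \emph{(C2)}, convexity of $\partial X$ pairs naturally with the lower Ricci bound $\Ric\geq n\,g$: after smoothing, the doubled metric has strictly positive Ricci curvature. The upper bound $\Ric\leq \tfrac{1}{2}n(n+1)\,g$ is calibrated so that in dimensions $4$ and $5$ the smoothing propagates enough pinching onto $DX$ to trigger a sphere theorem---either PIC after smoothing, or a pinched curvature operator amenable to Brendle-type convergence under Ricci flow. For $n=3$ this delivers $DX\cong \bS^4$; for $n=4$ the hypothesis $\pi_3(X,\partial X)=0$ plays the same handle-theoretic role as in \Cref{Scalar5}, forcing the classification to land on $DX\cong \bS^5$. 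Once $DX\cong \bS^{n+1}$ is established, the involution $\tau$ fixes $\partial X$, and $X$ is the closure of one of the two components of $DX\setminus\partial X$: in dimensions $\leq 4$ the generalized Schoenflies theorem yields $X\cong D^{n+1}$ immediately, while in dimension $5$ the hypothesis $\pi_3(X,\partial X)=0$ enables the same handle-theoretic identification used in \Cref{Scalar5}.

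The main obstacle is the smoothing step. Producing a smooth metric on $DX$ from the $C^{1,1}$ doubled metric while preserving PIC across a $2$-convex interface (resp.\ the two-sided Ricci bound across a convex interface) is delicate: PIC is a strong pointwise condition that is not preserved by generic perturbations, and in the \emph{(C2)} case it is not obvious that the particular upper bound $\tfrac{1}{2}n(n+1)g$ propagates through mollification into a pinching condition strong enough to reach the sphere-theorem threshold---presumably the numerical value of the upper bound is dictated precisely by this requirement. A secondary subtlety is the topological identification of $X$ from $DX$ and $\tau$ in dimension $5$, which is why the $\pi_3(X,\partial X)=0$ hypothesis must be carried along just as in Theorem~\ref{Scalar5}.
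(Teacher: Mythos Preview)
Your doubling strategy has a fatal gap at the very end: knowing $DX\cong\bS^{n+1}$ gives \emph{no} information, because this already holds for \emph{every} compact contractible $X^{n+1}$ with $n\geq 3$, with no curvature input whatsoever. Indeed, van Kampen gives $\pi_1(DX)=1$ and Mayer--Vietoris shows $DX$ is an integral homology sphere, hence a homotopy sphere, hence homeomorphic to $\bS^{n+1}$ by Freedman/Smale. Concretely, every Mazur $4$--manifold doubles to $\bS^4$ yet is not the $4$--disk (its boundary is a nontrivial homology $3$--sphere). So the implication ``$DX\cong\bS^{n+1}\Rightarrow X\cong D^{n+1}$'' is simply false, and the generalized Schoenflies theorem does not apply because you have not shown $\partial X$ is a sphere---which is exactly the point at issue.

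The paper never doubles. It transfers the curvature condition from $X$ to $\partial X$ and then classifies $\partial X$ directly. For \ref{C1}, Chow's deformation theorem turns the $2$--convex boundary into a totally geodesic one while keeping PIC; the Gauss--Codazzi equations then show that the induced metric on $\partial X$ has PIC, and one applies the PIC classification to the $n$--manifold $\partial X$ (this is why the hypothesis is $n=4$ or $n\geq 12$: those are the dimensions in which the closed PIC classification is known, and they refer to $\dim\partial X$, not $\dim DX$). For \ref{C2}, the constants are calibrated for the Gauss equation on $\partial X$, not for any pinching on a double: the lower bound gives $\Scal_X\geq n(n+1)$, the upper bound gives $2\Ric_X(\nu,\nu)\leq n(n+1)$, and together with convexity one gets $\Scal_{\partial X}>0$ via the Schoen--Yau rearrangement. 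One then feeds $\partial X$ into the PSC classification machinery (Perelman plus Heegaard--Floer $d$--invariant for $n=3$; Chodosh--Li--Liokumovich for $n=4$). The hypothesis $\pi_3(X,\partial X)=0$ is used only to obtain $\pi_2(\partial X)=0$ via the long exact sequence of the pair---it is an input to the Chodosh--Li--Liokumovich theorem applied to $\partial X$, not a handle--theoretic device for recovering $X$ from $DX$. Once $\partial X$ is identified with $\bS^n$, the h--cobordism/Freedman step finishes.
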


\begin{remark}\label{r: Ric4}
  We can make the following refinement to the conclusion of \Cref{T: CurvManwithBoundary}. The homeomorphism can be promoted to a diffeomorphism in any of the following cases: when $n=3$ and $X$ is a Mazur manifold, when $n=4$ and $X$ supports a Riemannian metric $g$ satisfying \emph{\ref{C1}}, and when $n\geq 12$. 
   \end{remark}

\begin{remark}
From \Cref{Scalar5}, \Cref{T: CurvManwithBoundary}, and \Cref{Example1}, one can conclude that for a compact, contractible $5$-manifold with boundary $X$ the hypothesis that its interior supports a complete Riemannian metric with uniformly positive scalar curvature is \emph{much more restrictive} than for X, itself, to support a Riemannian metric with positive scalar curvature metric with mean convex (or convex) boundary.
\end{remark}

Now, an interesting result of Wang \cite{HHWangBoundaryConvexity} gives a sufficient condition for compact manifolds with boundary to be contractible. The sufficient curvature condition defined by Wang is the following.
\begin{enumerate}[label=(C3)]
    \item A Riemannian metric $g$ on a  $(n+1)$-manifold with boundary satisfies curvature condition \ref{C3} if $g$ satisfies 
    \[
\hspace{.5in} \Ric_X >0,\,\, \lambda\cdot\left(\frac{\vol(\partial X)}{\omega_{n}}\right)^{n}> 1-\delta(n),\,\, \text{ and } \,\,\frac{\Ric_{\partial X}}{n-1}\cdot \left(\frac{\vol(\partial X)}{\omega_{n}}\right)^{n}>1-\delta(n),
\] 
where $\lambda$ is the smallest eigenvalue of the second fundamental form of $\partial X$, $\omega_{n}$ is the volume of the standard unit round $n$-sphere, $\delta(n)\in(0,1)$ is a constant depending on $n$, and $\Ric_{\partial X}$ is the induced intrinsic Ricci curvature of $\partial X$.\label{C3}
\end{enumerate}

We note that there exist compact manifolds with boundary that are not contractible and support Riemannian metrics with positive Ricci curvature and convex boundary (see Perelman \cite{PerelmanConstruction} and Sha--Yang \cite{ShaYangExamples, ShaYangPositive}). In particular, Perelman produced a Riemannian metric with positive Ricci curvature and convex boundary on $\bC\bP^2\setminus B^4$, where $B^4$ is a 4-disk. This was later generalized by Burdick \cite[Theorem B]{BurdickMetrics} \cite[Theorem C]{BurdickRicci}.

We now make the following observation which relates the present work to the result of Wang.

\begin{maincor}\label{CurvContr}
    Let $X^{n+1}$, $n\in\{3,4,5\}$, be a compact $(n+1)$-manifold with boundary. Assume $X$ supports a Riemannian metric satisfying $\emph{\ref{C3}}$. Furthermore,
\begin{enumerate}[label=\emph{(\alph*)}]
    \item If $n=4$, assume $\pi_3(X,\partial X)=0$. \label{assumption1}
    \item If $n=5$, assume $\pi_3(X,\partial X)=\pi_4(X,\partial X)=0$. \label{assumption2}
\end{enumerate}
Then $X$ homeomorphic to the $(n+1)$-disk.
\end{maincor}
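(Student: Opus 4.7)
The plan is to combine Wang's theorem with a topological reconstruction of $X$ from its boundary.

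By Wang's theorem \cite{HHWangBoundaryConvexity}, condition \ref{C3} forces $X$ to be contractible. Hence $\partial X$ is a homology $n$-sphere and, by Bonnet--Myers applied to the positive intrinsic Ricci curvature on $\partial X$, the fundamental group $\pi_1(\partial X)$ is finite; being also the $\pi_1$ of a homology sphere, it is perfect.

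The next step is to show that $\partial X$ is homeomorphic to $S^n$. For $n=3$, Hamilton's theorem identifies $\partial X$ with a spherical space form $S^3/\Gamma$. After rescaling the induced metric so that $\Ric_{\partial X}\ge (n-1)g_{\partial X}$, the Bishop--Gromov inequality applied to the universal cover $\widetilde{\partial X}$ yields $|\Gamma|\cdot\vol(\partial X)\le \omega_n$, and the lower bound on $\vol(\partial X)$ extracted from \ref{C3} then forces $|\Gamma|=1$, provided $\delta(3)$ is sufficiently small. For $n\in\{4,5\}$, the relative homotopy assumptions combined with the contractibility of $X$ and the long exact sequence of the pair $(X,\partial X)$ give $\pi_j(\partial X)=0$ for $2\le j\le n-2$; the universal cover $\widetilde{\partial X}$ is thus simply connected with the same higher homotopy groups, and Hurewicz plus Poincar\'e duality identify it as a simply connected integral homology $n$-sphere, hence a homotopy $n$-sphere, hence homeomorphic to $S^n$ by Freedman (for $n=4$) or Smale (for $n=5$). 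The deck group $\pi_1(\partial X)$ then acts freely on $S^n$: for $n=4$, the identity $\chi(\partial X)=\chi(S^4)/|\pi_1(\partial X)|=2/|\pi_1(\partial X)|$ together with perfection of $\pi_1(\partial X)$ (the only perfect group of order at most $2$ is trivial) forces triviality, while for $n=5$ the same Bishop--Gromov volume argument as in the $n=3$ case applied to the cover rules out any non-trivial quotient.

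With $\partial X\cong S^n$ and $X$ contractible, the disk conclusion follows from classical topology: for $n=3$, Freedman's solution of the 4-dimensional topological Poincar\'e conjecture applied to the double $X\cup_{\partial X}(-X)$ (which is a closed simply connected homology 4-sphere, hence homeomorphic to $S^4$) gives $X\cong D^4$; for $n\in\{4,5\}$, removing an open ball from $X$ produces a simply connected h-cobordism between two copies of $S^n$ (since $X$ is contractible, the inclusions are homotopy equivalences), and the topological h-cobordism theorem of Freedman (for $n=4$) or Smale's h-cobordism theorem (for $n=5$) gives $X\cong D^{n+1}$.

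The main obstacle I anticipate is ensuring that Wang's constant $\delta(n)$ is small enough for the volume-based rigidity argument in Step 2 to preclude $|\pi_1(\partial X)|\ge 2$ when $n\in\{3,5\}$: one needs, roughly, $\delta(n)<1-1/2^n$. This should follow from the form of Wang's constant, or else can be arranged by replacing $\delta(n)$ with the minimum of the two constants without affecting the rest of the argument.
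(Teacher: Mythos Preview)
Your route to showing that the universal cover of $\partial X$ is a homotopy sphere when $n\in\{4,5\}$ (via the relative homotopy hypotheses, Hurewicz, and Poincar\'e duality on the finite cover) is correct and in fact more elementary than the paper's, which passes through the Chodosh--Li--Liokumovich classification; your Euler-characteristic argument for $n=4$ is also fine.

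The genuine gap is the Bishop--Gromov step for $n\in\{3,5\}$. The Ricci--volume inequality in \ref{C3}, with exponent $n$ on $\vol(\partial X)/\omega_n$, is \emph{not} scale-invariant: under $g\mapsto t^2 g$ the left side picks up a factor $t^{n^2-2}$. Concretely, if $\Ric_{\partial X}\ge K(n-1)g_{\partial X}$ and you rescale to $K=1$, the rescaled volume is $K^{n/2}\vol(\partial X)$, while \ref{C3} only yields $\vol(\partial X)>\omega_n\bigl((1-\delta)/K\bigr)^{1/n}$; combining with Bishop--Gromov gives $|\Gamma|^n<K^{\,1-n^2/2}/(1-\delta)$, which is vacuous for small $K$. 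Shrinking $\delta(n)$ cannot cure this (the obstruction is the uncontrolled $K$, not $\delta$), and in any case $\delta(n)$ is the fixed constant from Wang's theorem and is part of the hypothesis, not a parameter you are free to tune.

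For $n=5$ the fix is immediate: once the universal cover is $S^5$ and $\partial X$ is an integral homology sphere, Sjerve's theorem (\Cref{coversofhomologyspheres}) gives $\pi_1(\partial X)=1$ without any volume estimate. For $n=3$, however, Sjerve still permits the Poincar\'e homology sphere, and no inequality intrinsic to $\partial X$ will exclude it. The paper instead exploits the fact that $\partial X$ bounds the \emph{contractible} manifold $X$: the Heegaard--Floer $d$-invariant of $\partial X$ must then vanish, whereas that of the Poincar\'e sphere does not. This bounding argument is the missing ingredient in your $n=3$ case.
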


\begin{remark}\label{r: CurvContr}
   We can make the following refinement to the conclusion of  \Cref{CurvContr}. The homeomorphism can be promoted to a diffeomorphism in any of the following cases: when $n=3$ and $X$ is a Mazur manifold, and when  $n=5$.
\end{remark}

\subsection{Closed manifolds supporting metrics with positive curvature conditions} \label{closedmanifoldsubsec} To frame these results properly, we should recall what is already understood in the case of closed manifolds, where the landscape is considerably more developed.
\subsubsection{Scalar Curvature}\label{subsubsecSC}
In dimension two, the Gauss--Bonnet theorem implies that a closed surface supporting a Riemannian metric with positive (scalar) curvature must be diffeomorphic to either the 2-sphere or the real projective plane. 

In dimension three, the foundational work by Schoen--Yau \cite{SchoenYauOntheStructure}, Gromov--Lawson \cite{GromovLawsontheclassification}, and Perelman \cite{PerelmanRicciFlowwithSurgery} establishes that a closed 3-manifold supports a Riemannian metric with positive scalar curvature if and only if it is diffeomorphic to a connected sum of the form
\[
\bS^3\#\left(\#_{j=1}^J \bS^3/\Gamma_j\right) \# \left(\#_{k=1}^K \bS^2 \times \bS^1\right),
\]
where each $\Gamma_j$ is a nontrivial finite subgroup of $\mathrm{SO}(4)$ acting freely on $\bS^3$.

In higher dimensions, there remains topological restrictions imposed by positive scalar curvature. Chodosh, Li, and Liokumovich \cite{ChodoshLiLiokumovich45} showed that if a closed 4-manifold $M$ satisfies $\pi_2(M) = 0$ and supports a Riemannian metric with positive scalar curvature, then there is a finite cover $\hat{M}$ of $M$ that is homotopy equivalent to either $\bS^4$ or a connected sum of finitely many copies of $\bS^3 \times \bS^1$. Similarly, in dimension five, Chodosh, Li, and Liokumovich \cite{ChodoshLiLiokumovich45} proved that if a closed manifold $M$ satisfies $\pi_2(M) =0$, $\pi_3(M) = 0$, and supports a Riemannian metric with positive scalar curvature, then there is a finite cover of $M$ that is homotopy equivalent to either $\bS^5$ or a connected sum of finitely many copies of $\bS^4 \times \bS^1$. (See also the work of Chen--Chu--Zhu    \cite{ChenChuZhuPositivescalarcurvaturemetrics}.)

\subsubsection{Isotropic Curvature}
Manifolds supporting Riemannian metrics with PIC were first studied by Micallef and Moore \cite{MicallefMooreMinimal}. They showed a simply connected, closed $n$-manifold, $n\geq 4$, which supports a Riemannian metric with PIC is homeomorphic to the $n$-sphere. 

Furthermore, a complete classification of manifolds supporting Riemannian metrics with PIC is known in dimension four by the work of Chen, Tang, and Zhu \cite{ChenTangZhuCompleteClassification} (see also Hamilton \cite{HamiltonFourManifoldsPIC}). In particular, they showed that a connected, closed 4-manifold supports a Riemannian metric with PIC if and only if it is diffeomorphic to a connected sum of the form
\[
\bS^4 \# \left(\#_{j=1}^J \bR\bP^4\right) \# \left(\#_{k=1}^K (\bS^3 \times \bR)/G_k\right),
\]
where each $G_k$ is a cocompact discrete
subgroup of the isometry group of the standard metric on $\bS^3\times\bR$ acting freely on $\bS^3\times\bR$.

Finally, we note that very recent work of Huang \cite{HuangClassification,HuangCompact}, building on Brendle \cite{BrendleRicciFlowPIC} and Chen--Tang--Zhu \cite{ChenTangZhuCompleteClassification}, states a connected, closed $n$-manifold, $n\geq 12$, supports a Riemannian metric with PIC if and only if it is diffeomorphic to a connected sum of the form 
\[
\bS^n\#\left(\#_{j=1}^J  \bS^n/\Gamma_j\right) \# \left(\#_{k=1}^K (\bS^{n-1} \times \bR)/G_k\right),
\]
where each $G_k$ is a cocompact discrete
subgroup of the isometry group of the standard metric on $\bS^{n-1}\times\bR$ acting freely on $\bS^3\times\bR$ and each $\Gamma_j$ is a nontrivial finite subgroup of the isometry group of the standard metric on $\bS^{n}$ acting freely on $\bS^n$. 

\bigskip

The paper is organized as follows. In \Cref{prelims}, we collect important propositions from algebraic topology and Riemannian geometry, especially results about $\mu$-bubbles.  In \Cref{proofs}, we prove \Cref{Scalar5}, \Cref{T: CurvManwithBoundary}, and \Cref{CurvContr}. The main approach underlying the proofs is to apply methods from geometric analysis (e.g. $\mu$-bubbles) to impose restrictions on the boundary of the manifold, and then to use algebraic topology to complete the arguments.

\section{Acknowledgments}
This project has received funding from the European Research Council (ERC) under the European Union’s
Horizon 2020 research and innovation programme (grant agreement No. 947923). The author thanks Alessandro Carlotto for his interest in this work and his comments on an early draft, which greatly helped improve the style of the paper.

\section{Preliminaries}\label{prelims}

In this section, we collect relevant propositions from Riemannian geometry and topology that will be used to prove our main theorems.
\subsection{Algebraic Topology} The following statements address topological constraints on the boundaries of contractible manifolds and integral homology spheres. 

 \begin{prop}\label{poincarehomology}
    Let $X^{n+1}$ be a compact, contractible $(n+1)$-manifold with boundary, then $\partial X$ is an integral homology sphere, namely, $H_*(\partial X;\bZ)=H_*(\bS^n;\bZ)$.
\end{prop}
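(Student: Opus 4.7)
The plan is to combine Lefschetz duality with the long exact sequence of the pair $(X,\partial X)$. First, since $X$ is contractible it is simply connected and in particular orientable, so Lefschetz duality applies and gives
$$H_k(X,\partial X;\bZ)\cong H^{n+1-k}(X;\bZ)$$
for all $k$. Contractibility of $X$ also yields $H^0(X;\bZ)=\bZ$ and $H^j(X;\bZ)=0$ for $j>0$, so the only nonzero relative group is $H_{n+1}(X,\partial X;\bZ)\cong\bZ$, while $H_k(X,\partial X;\bZ)=0$ for $k\neq n+1$.

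Next I would feed this into the long exact sequence of the pair,
$$\cdots\to H_{k+1}(X,\partial X)\to H_k(\partial X)\to H_k(X)\to H_k(X,\partial X)\to\cdots.$$
For $0<k<n$ both neighboring relative groups vanish and $H_k(X)=0$, forcing $H_k(\partial X;\bZ)=0$. At $k=n$ the segment $0=H_{n+1}(X)\to H_{n+1}(X,\partial X)\to H_n(\partial X)\to H_n(X)=0$ identifies $H_n(\partial X;\bZ)\cong\bZ$. The tail of the sequence, together with the observation that $\partial X\neq\emptyset$ (a closed contractible manifold of positive dimension cannot exist, since its top-degree homology would be nontrivial), gives $H_0(\partial X;\bZ)\cong H_0(X;\bZ)=\bZ$, so $\partial X$ is connected. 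Degrees $k>n$ vanish automatically since $\partial X$ is a closed $n$-manifold. These groups match $H_*(\bS^n;\bZ)$ exactly.

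There is no genuine obstacle here: the only technical point is to justify orientability of $X$ before invoking Lefschetz duality with integer coefficients, and this is immediate from simple connectivity (so $w_1(X)=0$). Orientability of $\partial X$ then follows automatically from the induced orientation. The rest is purely formal bookkeeping in the long exact sequence.
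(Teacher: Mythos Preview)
Your proof is correct and follows essentially the same route as the paper: Poincar\'e--Lefschetz duality to compute $H_*(X,\partial X;\bZ)$, then the long exact sequence of the pair to read off $H_*(\partial X;\bZ)$. If anything, you are slightly more careful than the paper, since you explicitly justify orientability of $X$ before invoking duality and verify that $\partial X\neq\emptyset$.
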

\begin{proof}
    As $X^{n+1}$ is contractible, $\pi_i(X)=0$ and $H_i(X;\bZ)=H^i(X;\bZ)=0$ for all $i\geq 1$ and $H_0(X;\bZ)=H^0(X;\bZ)=\bZ$. Recall that the long exact sequence for the integral homology of $(X,\partial X)$ states
    \[
\cdots \to H_{i+1}(\partial X;\bZ) \to H_{i+1}(X;\bZ) \to H_{i+1}(X, \partial X;\bZ) \to H_{i}(\partial X;\bZ) \to H_{i}(X;\bZ) \to \cdots
    \]
    on the other hand Poincaré-Lefschetz duality says $H_{i+1}(X, \partial X;\bZ) \cong H^{n- i}(X;\bZ).$ For $1\leq i\leq n$, the long exact sequence becomes
    \[
    0\to H_{i+1}(X, \partial X;\bZ) \to H_{i}(\partial X;\bZ) \to0.
    \]
    Therefore, $H_{i}(\partial X;\bZ)\cong H_{i+1}(X, \partial X;\bZ)\cong H^{n - i}(X;\bZ)\cong 0$ for $1\leq i\leq n-1$ and  $H_{n}(\partial X;\bZ)\cong H_{n+1}(X, \partial X;\bZ)\cong H^{0}(X;\bZ)\cong \bZ$. 

    Since $X$ is connected, we know that $H_0(X,\partial X; \bZ)=0$; therefore,
    \[
    0 \to H_{0}(\partial X;\bZ) \to H_{0}(X;\bZ)\to 0.
    \]
    We see then that $H_{0}(\partial X;\bZ) \cong H_{0}(X;\bZ)\cong \bZ$.
\end{proof}

The next proposition discusses what manifolds can be a finite cover of an integral homology sphere.

\begin{lemma}\label{euler}
    Let $M^{2n}$ be an integral homology $2n$-sphere. Then a finite cover of $M$ cannot be homotopy equivalent to a connected sum of finitely many copies of $\bS^{2n-1}\times\bS^1$.
\end{lemma}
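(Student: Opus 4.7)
The plan is to argue by Euler characteristic, which is both a homotopy invariant and multiplicative under finite covers; a simple numerical incompatibility between the two sides will finish the proof.

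First, since $M^{2n}$ is an integral homology $2n$-sphere, one has $\chi(M)=\chi(\bS^{2n})=2$ (indeed, $H_*(M;\bZ)=H_*(\bS^{2n};\bZ)$ determines $\chi$). Therefore, for any finite cover $\widetilde{M}\to M$ of degree $d\geq 1$, multiplicativity of the Euler characteristic under covering maps gives
\[
\chi(\widetilde{M})\;=\;d\cdot\chi(M)\;=\;2d\;\geq\;2.
\]

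Next, I would compute $\chi(N_K)$ for $N_K:=\#_{k=1}^{K}(\bS^{2n-1}\times\bS^1)$ with $K\geq 1$. Applying the standard formula $\chi(A\#B)=\chi(A)+\chi(B)-\chi(\bS^{2n})=\chi(A)+\chi(B)-2$ for connected sums of closed $2n$-manifolds, together with $\chi(\bS^{2n-1}\times\bS^1)=\chi(\bS^{2n-1})\cdot\chi(\bS^1)=0$, a short induction on $K$ yields $\chi(N_K)=2-2K$, which is $\leq 0$ whenever $K\geq 1$.

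Since the Euler characteristic is a homotopy invariant, a homotopy equivalence $\widetilde{M}\simeq N_K$ would force $2d=2-2K$, which is impossible for $d\geq 1$ and $K\geq 1$. I anticipate no real obstacle here; the only subtle point is the tacit convention $K\geq 1$ in the statement (the degenerate case $K=0$ would reduce $N_K$ to $\bS^{2n}$, which is trivially a finite cover of the integral homology sphere $\bS^{2n}$, so the lemma is only interesting when at least one $\bS^{2n-1}\times\bS^1$ summand appears).
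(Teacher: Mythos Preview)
Your proof is correct and follows essentially the same approach as the paper: both compute $\chi(M)=2$, invoke multiplicativity of the Euler characteristic under finite covers, use the connected-sum and product formulas to get $\chi(N_K)=2-2K$, and derive the numerical contradiction $2d=2-2K$ with $d,K\geq 1$. Your added remark about the degenerate case $K=0$ is a reasonable clarification but not part of the paper's argument.
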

\begin{proof}
    We proceed by contradiction. Suppose that $N^{2n}$ is a manifold that is homotopy equivalent to a connected sum of finitely many copies of $\bS^{2n-1}\times\bS^1$ and is a $d$-cover of $M$ for some $d\geq 1$. Now consider the Euler characteristic:
    \[
    \chi\left(\#_{k=1}^K(\bS^{2n-1}\times\bS^1)\right)=\chi(N) = d\cdot\chi(M).
    \]
    The first equality holds since the Euler characteristic is a homotopy invariant and the second holds because $N$ is a $d$-cover of $M$.
    
    As $M$ is an integral $2n$-homology sphere, we have $\chi(M) =2$. Also, we note
    \begin{align*}
        \chi\left(\#_{k=1}^K(\bS^{2n-1}\times\bS^1)\right) &= \chi\left(\bS^{2n-1}\times\bS^1\right) + \chi\left(\#_{k=1}^{K-1}(\bS^{2n-1}\times\bS^1)\right) - \chi\left(\bS^{2n}\right)\\
        &= \chi\left(\bS^{2n-1}\right)\cdot\chi\left(\bS^1\right) + \chi\left(\#_{k=1}^{K-1}(\bS^{2n-1}\times\bS^1)\right) - 2 \\
        &=\chi\left(\#_{k=1}^{K-1}(\bS^{2n-1}\times\bS^1)\right) - 2,
    \end{align*}
    where in the first equality we apply the property of the Euler characteristic for connected sums, in the second equality we apply the property of the Euler characteristic for products, and the last equality follows from fact $\chi(\bS^1)=0$.

    Then, by induction $\chi\left(\#_{k=1}^K(\bS^{2n-1}\times\bS^1)\right)=\chi\left((\bS^{2n-1}\times\bS^1)\right)-2(K-1)=-2(K-1)$.
    Therefore, $-K+1=d$, which is a contradiction as $d\geq1$ and $K\geq 1$.
\end{proof}

Next, we record a theorem of Sjerve \cite{SjerveHomologySpheres} on integral homology spheres that are covered by the sphere.
\begin{theorem}\label{coversofhomologyspheres}
    If $M^n$, $n\geq 3$, is an integral homology $n$-sphere which is covered by $\bS^n$, then either $\pi_1(M)=0$ (and, thus, the covering map is the identity) or $n=3$ and $\pi_1(M)$ is the binary icosahedral group.
\end{theorem}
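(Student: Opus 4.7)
The plan is to combine a Cartan--Leray spectral sequence argument with the classification of finite groups admitting free actions on spheres. Compactness of $\bS^{n}$ forces the covering $\bS^{n}\to M$ to be finite, and setting $G:=\pi_{1}(M)$ we obtain $M=\bS^{n}/G$ with $G$ finite and acting freely. Since $M$ is an integral homology sphere, $G^{\mathrm{ab}}=H_{1}(M;\bZ)=0$, so $G$ is perfect; in particular $H_{n}(M;\bZ)=\bZ$, so $M$ is orientable and $G$ acts trivially on $H_{n}(\bS^{n};\bZ)$. The case of even $n$ is immediate: the quotient has Euler characteristic $\chi(M)=2/|G|$, forcing $|G|\leq 2$, and perfectness rules out $|G|=2$, so $G=1$.

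For $n\geq 3$ odd I would apply the Cartan--Leray spectral sequence
\[
E^{2}_{p,q}=H_{p}(G;H_{q}(\bS^{n};\bZ))\;\Longrightarrow\;H_{p+q}(M;\bZ),
\]
which has non-trivial rows only at $q=0$ and $q=n$, and whose only potentially non-trivial differential is $d_{n+1}\colon E^{n+1}_{p,0}\to E^{n+1}_{p-n-1,n}$. This differential vanishes for $p<n+1$, giving $H_{i}(M;\bZ)\cong H_{i}(G;\bZ)$ for $0\leq i\leq n-1$. The homology-sphere hypothesis thus forces $H_{i}(G;\bZ)=0$ for $1\leq i\leq n-1$. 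By Smith--Swan theory, $G$ has periodic integral cohomology of some period $d$ dividing $n+1$; if $d\leq n-1$, a full period of $H_{*}(G)$ lies in the vanishing range, forcing $G=1$. So either $G=1$ or $d=n+1$.

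The main obstacle is the final identification of $G$ in the remaining case. I would invoke the Milnor--Wolf classification of finite groups acting freely on spheres together with the Suzuki--Zassenhaus list of finite groups with periodic cohomology. The combined constraints (perfectness, trivial Schur multiplier forced by $H_{2}(G;\bZ)=0$, and periodic cohomology of period exactly $n+1$) are highly restrictive: among the Wolf families, only the binary polyhedral groups can be perfect, and of those only the binary icosahedral group $I^{*}=\mathrm{SL}(2,\bF_{5})$ actually is. Its integral cohomology has exact period $4$, corresponding to $n=3$, and a direct computation verifies $H_{1}(I^{*})=H_{2}(I^{*})=0$. For $n\geq 5$ odd, the required period $n+1\geq 6$ is not realized by any non-trivial perfect group in the Wolf list, so $G$ must be trivial, completing the dichotomy.
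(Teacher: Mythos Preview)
The paper does not prove this statement at all; it is simply recorded as a theorem of Sjerve \cite{SjerveHomologySpheres} and then invoked as a black box. So there is nothing in the paper to compare your argument against, and your outline in fact goes well beyond what the paper does. The overall architecture you propose (Cartan--Leray to obtain $H_i(G;\bZ)=0$ for $1\le i\le n-1$, periodicity of $\hat H^*(G)$ forced by the free action, then a group-theoretic identification) is the natural route and is in the spirit of Sjerve's proof.

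That said, your final identification step has a real gap. You restrict attention to the ``Wolf families'', i.e.\ the groups admitting free \emph{linear} actions on spheres, but the hypothesis is only that $\bS^n\to M$ is a covering map; the deck group need not act linearly, and the paper genuinely uses the theorem in that generality (for instance in the proof of \Cref{Scalar5}, where the cover arises from a homotopy equivalence and Freedman's theorem, not from an isometric quotient). Outside Wolf's list there are further perfect groups with periodic cohomology: for every odd prime $p\geq 5$ the group $SL(2,\bF_p)$ is perfect, has trivial Schur multiplier, has periodic cohomology (its odd Sylows are cyclic and its $2$-Sylow is generalized quaternion), and by Madsen--Thomas--Wall acts freely on some sphere. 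For $p\geq 7$ its period exceeds $4$, so your stated criterion ``no non-trivial perfect group in the Wolf list has period $\geq 6$'' does not settle the matter. What actually excludes $SL(2,\bF_p)$ for $p\geq 7$ is the \emph{full} vanishing $H_i(G;\bZ)=0$ for every $1\le i\le n-1$, not merely $H_1=H_2=0$: for example a Sylow/Weyl-group computation shows that the $7$-primary part of $H_5(SL(2,\bF_7);\bZ)$ is $\bZ/7$, which already contradicts the homology-sphere condition in the relevant range. To close the argument you must either carry out this stronger cohomological elimination across the Suzuki--Zassenhaus list, or cite Sjerve directly as the paper does.
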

This means the Poincar\'{e} homology 3-sphere (which is $\bS^3/\Gamma$ where $\Gamma$ is the binary icosahedral group) is the only non-trivial integral homology $n$-sphere that is covered by the sphere.
\begin{lemma}\label{boundaryconnectedness}
    Let $X^{n+1}$, $n\in\{4,5\}$, be a compact, contractible $(n+1)$-manifold with boundary. If $n=4$ and $\pi_3(X,\partial X)=0$, then $\pi_2(\partial X)=0.$ If $n=5$, $\pi_3(X,\partial X)=0$ and $\pi_4(X,\partial X)=0$, then $\pi_2(\partial X)=0$ and $\pi_3(\partial X )=0.$ 
\end{lemma}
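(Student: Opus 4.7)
The plan is to read everything off from the long exact sequence of homotopy groups for the pair $(X,\partial X)$, exploiting the fact that contractibility of $X$ makes every $\pi_k(X)$ vanish. Since $\partial X$ is an integral homology sphere (by \Cref{poincarehomology}), it is connected, so homotopy-theoretic base-point issues do not arise, and for $k\geq 2$ the relevant portion of the sequence is an exact sequence of groups.

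First I would write down the relevant three-term portion of the long exact sequence:
\[
\pi_{k+1}(X) \longrightarrow \pi_{k+1}(X,\partial X) \stackrel{\partial}{\longrightarrow} \pi_k(\partial X) \longrightarrow \pi_k(X).
\]
Contractibility gives $\pi_k(X)=\pi_{k+1}(X)=0$, hence $\pi_k(\partial X)\cong \pi_{k+1}(X,\partial X)$ for all $k\geq 1$. The desired vanishings now follow term by term: in the $n=4$ case, setting $k=2$ yields $\pi_2(\partial X)\cong \pi_3(X,\partial X)=0$; in the $n=5$ case, setting $k=2$ and $k=3$ gives $\pi_2(\partial X)\cong\pi_3(X,\partial X)=0$ and $\pi_3(\partial X)\cong\pi_4(X,\partial X)=0$, respectively.

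There is essentially no technical obstacle here: the only thing to check is that exactness is meaningful at the relevant terms, which is automatic because we are working with $\pi_k$ for $k\geq 2$, where the sequence is one of abelian groups, and the map $\pi_k(\partial X)\to \pi_k(X)$ has trivial codomain so the image of the boundary operator equals all of $\pi_k(\partial X)$. Thus the hypothesis that $\pi_{k+1}(X,\partial X)=0$ forces $\pi_k(\partial X)=0$ immediately.
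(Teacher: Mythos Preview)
Your proof is correct and follows essentially the same approach as the paper: both use the long exact sequence of the pair $(X,\partial X)$ together with contractibility of $X$ to obtain the isomorphism $\pi_{i+1}(X,\partial X)\cong\pi_i(\partial X)$ for $i\geq 1$, from which the stated vanishings are immediate.
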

\begin{proof}
    As $X$ is contractible, we have for $i\geq 1$ that $\pi_i(X)=0$. Consider the long exact sequence of homotopy pairs $(X,\partial X)$.
    \[
        \cdots \to \pi_{i+1}(\partial X) \to \pi_{i+1}(X) \to \pi_{i+1}(X, \partial X) \to \pi_{i}(\partial X) \to \pi_{i}(X) \to \cdots.
    \]
  In particular, for $i\geq 1 $, $0 \to \pi_{i+1}(X,\partial X) \to \pi_{i}(\partial X) \to 0$. Therefore, $\pi_{i+1}(X,\partial X) \cong  \pi_{i}(\partial X)$.
\end{proof}

Lastly, we need the following fact about virtually infinite cyclic groups which will use later. First, we recall the definition of a virtually infinite cyclic group.

\begin{deff}
    A group $G$ is called \emph{virtually infinite cyclic} if $G$ contains $\bZ$
 as subgroup of finite index.
\end{deff}

\begin{prop}\label{PropVirtuallycyclic}
    The abelianization of a virtually infinite cyclic group is non-trivial.
\end{prop}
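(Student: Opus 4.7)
The plan is to produce an explicit nontrivial homomorphism from $G$ to an abelian group. Since $G$ contains $\bZ$ as a subgroup of finite index, I would first replace this subgroup by its normal core in $G$ (the intersection of all its $G$-conjugates); the normal core has finite index in $G$, hence is infinite, and being a subgroup of $\bZ$ it is itself isomorphic to $\bZ$. Calling this normal subgroup $N$ and setting $m = [G:N] < \infty$, the conjugation action of $G$ on $N$ yields a homomorphism $\phi : G \to \mathrm{Aut}(N) = \mathrm{Aut}(\bZ) \cong \bZ/2\bZ$, and I would split into cases according to whether $\phi$ is trivial.

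If $\phi$ is nontrivial, the conclusion is immediate: $\phi$ is then a surjection $G \twoheadrightarrow \bZ/2\bZ$ onto an abelian group, so it factors through $G^{\mathrm{ab}}$ and exhibits a nonzero quotient of $G^{\mathrm{ab}}$. The delicate case is when $\phi$ is trivial, meaning $N$ is central in $G$, so that $1 \to N \to G \to G/N \to 1$ is a central extension with $G/N$ finite of order $m$.

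In this central case, my plan is to invoke the transfer (Verlagerung) homomorphism $V : G \to N$, which is well-defined since $N$ is abelian. The standard coset computation gives $V|_N(n) = \prod_{i=1}^m g_i^{-1} n g_i$ for any choice of coset representatives $g_i$; since $N$ is central, each factor equals $n$, hence $V|_N$ is multiplication by $m$. As $N \cong \bZ$ and $m \geq 1$, this is injective, so the composition $N \hookrightarrow G \to G^{\mathrm{ab}} \xrightarrow{V} N$ is injective, and therefore $N$ embeds into $G^{\mathrm{ab}}$. The main subtlety is setting up the transfer computation cleanly; an equivalent self-contained alternative is the five-term exact sequence $H_2(G/N;\bZ) \to N \to G^{\mathrm{ab}} \to (G/N)^{\mathrm{ab}} \to 0$ of the central extension, where finiteness of $G/N$ forces the image of $H_2(G/N;\bZ)$ in $N \cong \bZ$ to vanish, again producing an injection $N \hookrightarrow G^{\mathrm{ab}}$. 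Either way, $G^{\mathrm{ab}}$ in fact contains an infinite cyclic subgroup.
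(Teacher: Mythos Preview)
Your proof is correct and takes a genuinely different route from the paper. The paper invokes a structural classification (citing \cite[Lemma 11.4]{Hempel3manifold}) to exhibit a short exact sequence $0 \to N \to G \to G/N \to 0$ with $N$ finite and $G/N$ isomorphic to $\bZ$ or $\bZ_2 * \bZ_2$, then simply observes that the quotient has nontrivial abelianization. Your argument is entirely self-contained: passing to the normal core of the finite-index copy of $\bZ$, you dichotomize on the conjugation action $G \to \mathrm{Aut}(\bZ) \cong \bZ/2\bZ$, and in the central case use the transfer (or equivalently the five-term exact sequence) to embed $\bZ$ into $G^{\mathrm{ab}}$. The paper's route is shorter once the cited lemma is in hand, but yours avoids external input and in the central case actually proves more, namely that $G^{\mathrm{ab}}$ is infinite.
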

\begin{proof}
    Let $G$ be a virtually infinite cyclic group. Then there exists a short exact sequence $0\to N\to G\to G/N\to 0$ with a finite normal subgroup $N$ with $G/N$ isomorphic to $\bZ$ or $\bZ_2*\bZ_2$ (see \cite[Lemma 11.4]{Hempel3manifold}). Moreover, we note that the abelianization of $\bZ_2*\bZ_2$ is $\bZ_2\times\bZ_2$. Therefore, there is a non-trivial homomorphism $\varphi:G\to A$ where $A$ is an Abelian group (either $\bZ$ or $\bZ_2\times\bZ_2$). By the universal property of $G^{\mathrm{ab}}$ (the abelianization of $G$) there is a unique homomorphism $\tilde{\varphi}$ such that the following diagram commutes.

\[
\begin{tikzcd}
G \arrow[r, "\varphi"] \arrow[d, "\rho"'] & A \\
G^{\mathrm{ab}} \arrow[ur, dashed, "\tilde{\varphi}"']
\end{tikzcd}
\]

Since $\varphi$ is non-trivial, take $g\in G$ such that $\varphi(g)\neq 0$. Therefore, $\tilde{\varphi}(\rho(g))\neq 0$ which implies $\rho(g)\neq 0$. We conclude that $G^{\mathrm{ab}}$ is non-trivial. 
\end{proof}

\subsection{Riemannian Geometry}\label{RiemSubsection} Now, we record important information about manifolds that support Riemannian metrics with positive scalar curvature. For clarity, we will first define the second fundamental form of the boundary of a Riemannian manifold with boundary. Given $(X^{n+1},g)$ a Riemannian manifold with boundary and $\nu$ the outward-pointing unit normal to $\partial X$. We define the second fundamental form  $A_{\partial X}(U,V)=\langle\nabla_U \nu, V\rangle$ for any $U,V\in T\partial X$. We further note that by convex boundary, we mean all the eigenvalues of   $A_{\partial X}$ are strictly positive. We say the boundary is two-convex if $A(U,U)+A(V,V)>0$ for all orthonormal $U,V\in T\partial X$. We say that the boundary is mean convex if the mean curvature (i.e., the trace of $A_{\partial X}$) is strictly positive. Now, we record a short proposition about pinched Ricci curvature and convex boundary.

\begin{prop}\label{pinchedRicci}
    Let $(X^{n+1},g)$, be a Riemannian manifold with non-empty boundary. Assume that $ng\leq\mathrm{Ric}\leq \frac{1}{2}n(n+1)g$ and the boundary is convex. Then the induced metric $g_{\partial X}$ on $\partial X$ has positive scalar curvature.
\end{prop}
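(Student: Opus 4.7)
The plan is to apply the doubly-traced Gauss equation to the hypersurface $\partial X \subset X$ and then verify that, under the stated hypotheses, every term on the resulting right-hand side has a favorable sign. With $\nu$ the outward unit normal to $\partial X$, $\{e_1,\dots,e_n\}$ a local orthonormal frame of $T\partial X$, and $h_{ij}=\langle\nabla_{e_i}\nu,e_j\rangle$ the components of the scalar second fundamental form, the standard computation yields
\[
\Scal_{\partial X} \;=\; \Scal_X \,-\, 2\Ric_X(\nu,\nu)\,+\,H^2\,-\,|A_{\partial X}|^2,
\]
where $H=\sum_i h_{ii}$ and $|A_{\partial X}|^2=\sum_{i,j} h_{ij}^2$. (I tacitly assume $n\geq 2$; for $n=1$ the induced manifold is one-dimensional and its scalar curvature is vacuous.)

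Next I use the pinching hypothesis to control the ambient contribution. Tracing $\Ric_X\geq ng$ over an orthonormal frame of $T_pX$ gives $\Scal_X \geq n(n+1)$, while $\Ric_X \leq \tfrac{1}{2}n(n+1)g$ applied to $\nu$ gives $\Ric_X(\nu,\nu)\leq \tfrac{1}{2}n(n+1)$. Combined,
\[
\Scal_X \,-\, 2\Ric_X(\nu,\nu) \;\geq\; n(n+1)\,-\,n(n+1) \;=\; 0.
\]
This is the key conceptual point of the argument: the two-sided Ricci bound is calibrated exactly so that the ambient terms in the traced Gauss equation contribute non-negatively.

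The remaining second-fundamental-form terms are then handled by convexity. If $\lambda_1,\dots,\lambda_n>0$ denote the principal curvatures of $\partial X$, then $H=\sum_i\lambda_i$ and $|A_{\partial X}|^2=\sum_i\lambda_i^2$, so
\[
H^2 \,-\, |A_{\partial X}|^2 \;=\; \Bigl(\sum_i \lambda_i\Bigr)^{\!2}\,-\,\sum_i \lambda_i^2 \;=\; 2\sum_{i<j}\lambda_i\lambda_j \;>\;0,
\]
strictness coming from $\lambda_i>0$ for every $i$ and $n\geq 2$. Substituting back yields $\Scal_{\partial X}>0$.

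The proof is essentially a one-line computation and so there is no serious obstacle; the main observation worth isolating is that the two-sided pinching $ng\leq \Ric_X\leq \tfrac{1}{2}n(n+1)g$ is precisely the strength required to neutralize the normal Ricci term in the traced Gauss identity, after which the convexity of $\partial X$ alone is enough to force strict positivity of the induced scalar curvature.
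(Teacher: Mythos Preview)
Your proof is correct and follows essentially the same approach as the paper: both apply the traced Gauss equation $\Scal_{\partial X}=\Scal_X-2\Ric_X(\nu,\nu)+H^2-|A_{\partial X}|^2$, use the two-sided Ricci pinching to bound $\Scal_X-2\Ric_X(\nu,\nu)\geq 0$, and then invoke strict convexity to conclude $H^2-|A_{\partial X}|^2=\sum_{i\neq j}\lambda_i\lambda_j>0$.
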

\begin{proof}
    Let $\lambda_i$ be the $n$ eigenvalues of $A_{\partial X}$. By the Gauss equations and the Schoen--Yau rearrangement trick \cite{SchoenYauExistenceofincompressible}, we see that
   \[
        \Scal_{\partial X} =\Scal_X -2\Ric_X(\nu,\nu)-||A_{\partial X}||^2+H_{\partial X}^2\geq n(n+1)-n(n+1)+ \sum_{i\neq j} \lambda_i\lambda_j>0.
    \]
\end{proof}
\begin{remark}
    We note that, by the same argument, one can show that following curvature conditions also imply the induced metric on the boundary has positive scalar curvature.
    \begin{enumerate}[label=\emph{(\alph*)}]
        \item  $ng<\mathrm{Ric}< \frac{1}{2}n(n+1)g$ and the boundary is weakly convex.
         \item  $ng<\mathrm{Ric}\leq \frac{1}{2}n(n+1)g$ and the boundary is weakly convex.
        \item  $ng\leq \mathrm{Ric}< \frac{1}{2}n(n+1)g$ and the boundary is weakly convex.
    \end{enumerate}
         Here, weakly convex means that the eigenvalues of $A_{\partial X}$ are non-negative.
\end{remark}

\subsubsection{$\mu$-Bubbles} 
This section provides a brief overview of positive scalar curvature and $\mu$-bubbles; for a more comprehensive treatment, we refer the reader to Chodosh–Li \cite{ChodoshLiGeneralizedsoap}, Zhu \cite{ZhuWidth}, and Gromov \cite{GromovFourLectures}.

Let $(X^{n+1},g)$, $2\leq n\leq 6$, be a Riemannian $(n+1)$-manifold with boundary. Assume that $\partial X \neq \emptyset$ and that $\partial X$ has at least two boundary components. Now let $\partial X = \partial_-X \sqcup \partial_+X$ where both $\partial_+ X$ and $\partial_-X$ are nonempty and both $\partial_+ X$ and $\partial_-X$ are unions of connected components of the boundary. Now, fix a function $h\in C^\infty(\mathrm{int}(X))$ such that $h\to + \infty$ on $\partial_{+} X$ and $h\to -\infty$ on $\partial_{-} X$. Now, choose an open Caccioppoli set $\Omega_0$ with smooth boundary $\partial \Omega_0 \subset \mathrm{int}(X)$ and $\partial_+X\subset \Omega_0$. Consider the following functional:
\begin{equation}\label{e: mububble}
    \mathcal{A}(\Omega) = \int_{\partial^* \Omega} d\mathcal{H}^{n} - \int_X \left(\chi_{_\Omega} - \chi_{_{\Omega_0}} \right)h\, d\mathcal{H}^{n+1}
\end{equation}
for all $\Omega \in \mathcal{C}:= \{\text{Caccioppoli sets } \Omega\subset M \text{ with } \Omega \triangle \Omega_0 \Subset \mathrm{int}(X)\}$. See \Cref{2d}.
\begin{figure}[H]
\begin{center}
\begin{tikzpicture}

    \draw[thick, black] (-0.5,0) circle (2.5);
    \draw[thick, black] (0,0) circle (1);

    \draw[thick, blue, smooth, samples=100, domain=0:360]
        plot ({1.4*cos(\x) + 0.3*sin(3*\x)}, {1.4*sin(\x) + 0.3*cos(2*\x)});

    \node at (0.8,-1.8) {\tiny \(\textcolor{blue}{\Omega_0}\)};

   \fill[blue!30, opacity=0.6] 
        plot[smooth cycle, samples=100, domain=0:360]
        ({1.4*cos(\x) + 0.3*sin(3*\x)}, {1.4*sin(\x) + 0.3*cos(2*\x)});

    \draw[thick, red, smooth, samples=100, domain=0:360]
        plot ({1.6*cos(\x) + 0.25*sin(4*\x)}, {1.6*sin(\x) - 0.25*cos(3*\x)});
   \fill[red!30, opacity=0.6] 
        plot[smooth cycle, samples=100, domain=0:360]
        ({1.6*cos(\x) + 0.25*sin(4*\x)}, {1.6*sin(\x) - 0.25*cos(3*\x)});
    \node at (-1.9,0.5) {\tiny \(\textcolor{red}{\Omega}\)};

    \fill[white]  plot[smooth cycle, samples=100, domain=0:360] (0,0) circle (1);
    \node at (0,0.75) {\tiny \(\partial_+ X\)};
    \node at (-3.5,0) {\tiny \(\partial_- X\)};
\draw[thick, black] (0,0) circle (1);
\end{tikzpicture}
\end{center}
\caption{Schematic picture of a $\mu$-bubble} \label{2d}
\end{figure}
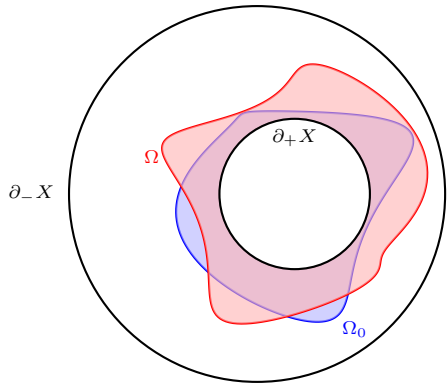

The existence and regularity of a minimizer of $\cA$ among all Caccioppoli
sets is claimed by Gromov \cite[Section 5.2]{GromovFourLectures} and rigorously carried out by Zhu \cite[Proposition 2.1]{ZhuWidth} (cf. \cite[Proposition 12]{ChodoshLiGeneralizedsoap}).

\begin{prop}\label{p: existence}
    For $2\leq n\leq 6$, there exists a smooth $(n+1)$-manifold $\Omega^{n+1}$ which minimizes $\cA$ on $\mathcal{C}$.
\end{prop}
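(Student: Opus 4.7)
The plan is to follow the standard three-step existence scheme for $\mu$-bubbles, as in Zhu \cite{ZhuWidth} and Chodosh--Li \cite{ChodoshLiGeneralizedsoap}: (i) confine any minimizing sequence to a fixed compact subset of $\mathrm{int}(X)$ via a barrier argument powered by the blow-up of $h$ near $\partial_\pm X$; (ii) extract a $BV$-limit and use lower semicontinuity of perimeter together with continuity of the bulk term to obtain a minimizer $\Omega_\infty \in \mathcal{C}$; (iii) invoke classical regularity theory for prescribed-mean-curvature minimizers, which applies in the ambient dimensions $n+1 \leq 7$ under consideration.

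First I would prove confinement. Since $h \to +\infty$ on $\partial_+ X$ and $h \to -\infty$ on $\partial_- X$, for every $C > 0$ there are neighborhoods $U_\pm^C$ of $\partial_\pm X$ on which $\pm h \geq C$. If a competitor $\Omega \in \mathcal{C}$ disagrees with $\Omega_0$ on a positive-measure subset $E$ of $U_+^C \cup U_-^C$, then the bulk term contributes at least $C \cdot \vol(E)$ to $\mathcal{A}(\Omega) - \mathcal{A}(\Omega_0)$, minus at most the perimeter contribution inside $U_\pm^C$. Cutting $\Omega$ along a level set $\{h = \pm C\}$ (which has locally finite $\mathcal{H}^n$-measure for a.e. $C$ by the coarea formula) yields a competitor in $\mathcal{C}$ whose added perimeter is only the $\mathcal{H}^n$-measure of that slice, while the bulk savings grow linearly in $C$. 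Choosing $C$ sufficiently large shows that every near-minimizer satisfies $\Omega \triangle \Omega_0 \subset K$ for a fixed compact $K \Subset \mathrm{int}(X)$.

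With confinement in hand, a minimizing sequence $\{\Omega_k\}$ has uniformly bounded perimeter in $K$, because $\mathcal{A}(\Omega_k)$ is bounded and $\|h\|_{L^\infty(K)} < \infty$. By standard $BV$ compactness, a subsequence converges in $L^1$ to $\chi_{\Omega_\infty}$ with $\Omega_\infty \triangle \Omega_0 \Subset \mathrm{int}(X)$. The bulk term passes to the limit by dominated convergence, and the perimeter is lower semicontinuous, so $\mathcal{A}(\Omega_\infty) \leq \liminf_k \mathcal{A}(\Omega_k) = \inf_{\mathcal{C}} \mathcal{A}$, and $\Omega_\infty$ is a minimizer.

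Finally, the first variation of $\mathcal{A}$ under compactly supported perturbations shows that $\partial^* \Omega_\infty$ is a hypersurface of prescribed mean curvature equal to $h$. Classical De Giorgi--Gonzalez--Massari--Tamanini regularity theory for perimeter minimizers with smooth bulk forcing then yields that the singular set has Hausdorff dimension at most $n - 7$, which is empty for $n \leq 6$; hence $\partial \Omega_\infty$ is a smooth embedded $n$-hypersurface and $\Omega_\infty$ is the desired smooth $(n+1)$-manifold. The main obstacle is the confinement argument in step (i): everything downstream is classical machinery, but without the barrier one cannot rule out minimizing sequences that escape to the boundary, where the functional is not defined.
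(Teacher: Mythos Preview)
The paper does not supply its own proof of this proposition; it simply records the statement and attributes the rigorous argument to Zhu \cite[Proposition 2.1]{ZhuWidth} (with Gromov \cite[Section 5.2]{GromovFourLectures} and Chodosh--Li \cite[Proposition 12]{ChodoshLiGeneralizedsoap} as additional references). Your three-step outline---barrier confinement using the blow-up of $h$, $BV$-compactness plus lower semicontinuity, and prescribed-mean-curvature regularity in ambient dimension at most $7$---is exactly the scheme carried out in those references, so your proposal is correct and aligned with what the paper invokes.
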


\begin{deff}
    We call a minimizer $\Omega \in \mathcal{C}$ of $\mathcal{A}$ a $\mu$-bubble.
\end{deff}

Next, we record the first and second variations of $\cA$.
 
 \begin{lemma}\label{lemm:1st-var}
Let $\{\Omega_t\}_{t\in(-1,1)}$ be a smooth $1$-parameter family of regions in $\mathcal{C}$ with $\Omega_0 = \Omega$, $\partial \Omega_t=\Sigma_t$, and normal speed $\psi\in C^\infty_c(\Sigma)$ at $t=0$, then 
\[\frac{d}{dt}\cA (\Omega_t)\big|_{t=0}=\int_{\Sigma} (H - h)\psi  \, d\cH^{n},\]
where $\Sigma=\partial \Omega$ and $H$ is the mean curvature of $\Sigma$ which is computed with respect to $\nu$, the outward pointing unit normal to $\partial \Omega=\Sigma$. In particular, a $\mu$-bubble $\Omega$ satisfies a prescribed mean curvature condition
\[
H = h
\]
along $\partial \Omega$. 
\end{lemma}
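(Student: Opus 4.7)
The plan is to differentiate the two terms of $\cA(\Omega_t)$ separately and then invoke the fundamental lemma of the calculus of variations to obtain the Euler--Lagrange equation. Throughout, smoothness of $\Sigma = \partial \Omega$ (when we later specialize to a minimizer) is guaranteed by \Cref{p: existence}, so no regularity subtleties arise.

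First, I would handle the perimeter term $\int_{\partial^* \Omega_t} d\cH^n$. Let $Y$ be a smooth vector field generating the variation $\{\Omega_t\}$, chosen so that $\langle Y, \nu\rangle = \psi$ on $\Sigma$ at $t = 0$. Because $\psi \in C_c^\infty(\Sigma)$, we may assume $Y$ is supported away from $\partial X$ and away from the part of $\partial \Omega$ meeting $\partial X$, so only interior boundary variations appear. The classical first variation of area then gives
\[
\frac{d}{dt} \cH^n(\Sigma_t)\Big|_{t=0} \;=\; \int_\Sigma (\mathrm{div}_\Sigma Y)\, d\cH^n \;=\; \int_\Sigma H\, \psi\, d\cH^n,
\]
where $H$ is computed with respect to the outward unit normal $\nu$ and there is no tangential contribution because the flux through $\partial \Sigma$ (if any) vanishes by the compact support of $\psi$.

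Next, for the bulk term $\int_X(\chi_{\Omega_t} - \chi_{\Omega_0})h\, d\cH^{n+1}$, the term $\chi_{\Omega_0}$ is independent of $t$ and drops out under differentiation. For the remaining piece, the region $\Omega_t$ is obtained by flowing $\Omega$ along $Y$, so by a standard coarea/divergence computation (or equivalently the Reynolds transport formula),
\[
\frac{d}{dt} \int_{\Omega_t} h\, d\cH^{n+1}\Big|_{t=0} \;=\; \int_\Sigma h\, \langle Y,\nu\rangle\, d\cH^n \;=\; \int_\Sigma h\, \psi\, d\cH^n.
\]
Combining the two computations yields
\[
\frac{d}{dt} \cA(\Omega_t)\Big|_{t=0} \;=\; \int_\Sigma (H - h)\, \psi\, d\cH^n,
\]
which is the claimed first variation formula.

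Finally, suppose $\Omega$ is a $\mu$-bubble, i.e.\ a minimizer of $\cA$ on $\mathcal{C}$. Then for any $\psi \in C_c^\infty(\Sigma)$, taking any variation realizing this normal speed yields $\int_\Sigma (H-h)\psi\, d\cH^n = 0$. Since $\psi$ is arbitrary and $\Sigma$ is smooth by \Cref{p: existence}, the fundamental lemma of calculus of variations forces $H \equiv h$ along $\partial \Omega$. The only mild technical point to check is that admissible variations in $\mathcal{C}$ realize all compactly supported normal speeds on $\Sigma$ and that $\psi$ being compactly supported in $\Sigma$ lets us ignore the fixed boundary condition $\partial_\pm X$; this is immediate from the constraint $\Omega \triangle \Omega_0 \Subset \mathrm{int}(X)$ in the definition of $\mathcal{C}$. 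No serious obstacle arises, as the argument is a direct adaptation of the classical first variation of area with an added bulk forcing term.
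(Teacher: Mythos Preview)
Your argument is correct and is exactly the standard first variation computation one would expect; the paper in fact states this lemma without proof, so there is no alternative approach to compare against.
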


\begin{lemma}\label{lemm:2nd-var}
Consider a $\mu$-bubble $\Omega$ with $\partial \Omega = \Sigma$. Assume that $\{\Omega_t\}_{t\in(-1,1)}$ is a smooth $1$-parameter family of regions in $\mathcal{C}$ with $\Omega_0 = \Omega$ and normal speed $\psi\in C^\infty_c(\Sigma)$ at $t=0$, then $\frac{d^2}{dt^2}\big|_{t=0}(\cA(\Omega_t))\ge 0$, where 
\[
\frac{d^2}{dt^2}\big|_{t=0}(\cA(\Omega_t))= \int_\Sigma \left(|\nabla_\Sigma \psi|^2-  \tfrac{1}{2} (R_X - R_\Sigma + |A_\Sigma|^2 + h^2 +2\langle \nabla h, \nu \rangle )\psi^2 \right)\, d\cH^{n}.
\]
\end{lemma}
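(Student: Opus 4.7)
The plan is to prove this the same way one derives the classical Jacobi operator for minimal hypersurfaces: differentiate the first-variation identity of Lemma~\ref{lemm:1st-var} a second time, use the critical-point relation $H = h$ to kill most of the cross terms, substitute the standard evolution formula for the mean curvature, and then rewrite the ambient Ricci curvature in terms of scalar curvatures via the Gauss equation. The nonnegativity $\frac{d^2}{dt^2}\big|_{t=0}\cA(\Omega_t) \geq 0$ itself is automatic, since $\Omega$ is a minimizer of $\cA$ in its class.

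Concretely, differentiating the first variation once more at $t = 0$ and using $H = h$ on $\Sigma$ causes every term in which a time derivative falls on the normal speed $\psi_t$ or on the area measure $d\cH^n$ to be multiplied by $(H - h) = 0$ and drop out, leaving
\[
\frac{d^2}{dt^2}\bigg|_{t=0}\cA(\Omega_t) = \int_\Sigma \bigl(\dot H - \langle \nabla h, \nu\rangle \psi\bigr)\psi\, d\cH^n,
\]
where the second summand records the fact that $h$, a fixed function on $X$, is pulled back along a flow whose initial normal velocity is $\psi\nu$. I would then substitute the standard formula
\[
\dot H = -\Delta_\Sigma \psi - \bigl(|A_\Sigma|^2 + \Ric_X(\nu,\nu)\bigr)\psi
\]
for the variation of mean curvature under a normal perturbation, and integrate $-\psi\,\Delta_\Sigma\psi$ by parts; since $\psi$ has compact support in $\Sigma$, no boundary contributions appear.

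At this stage the integrand is $|\nabla_\Sigma \psi|^2 - \bigl(|A_\Sigma|^2 + \Ric_X(\nu,\nu) + \langle \nabla h,\nu\rangle\bigr)\psi^2$. To match the stated form I apply the twice-traced Gauss equation
\[
R_\Sigma = R_X - 2\Ric_X(\nu,\nu) - |A_\Sigma|^2 + H^2
\]
(the same identity used in the proof of Proposition~\ref{pinchedRicci}) to rewrite $|A_\Sigma|^2 + \Ric_X(\nu,\nu) = \tfrac{1}{2}\bigl(R_X - R_\Sigma + |A_\Sigma|^2 + H^2\bigr)$, and finally substitute $H = h$ to conclude.

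I do not foresee any serious obstacle; the argument is routine differential geometry. The only points requiring mild care are the reduction to a purely normal variation (the tangential component of the variation vector field contributes only to first-order quantities, which vanish at a critical point by Lemma~\ref{lemm:1st-var}) and consistency of sign conventions in the Gauss equation.
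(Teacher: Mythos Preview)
The paper does not supply its own proof of this lemma; it simply records the first and second variation formulas as standard facts (see the sentence ``Next, we record the first and second variations of $\cA$'' preceding Lemmas~\ref{lemm:1st-var} and~\ref{lemm:2nd-var}). Your derivation is the standard one and is correct: differentiate the first-variation identity, use $H=h$ to kill the terms coming from the area form and from $\dot\psi$, insert the evolution formula $\dot H=-\Delta_\Sigma\psi-(|A_\Sigma|^2+\Ric_X(\nu,\nu))\psi$, integrate by parts, and apply the traced Gauss equation together with $H=h$ to rewrite $|A_\Sigma|^2+\Ric_X(\nu,\nu)$ as $\tfrac12(R_X-R_\Sigma+|A_\Sigma|^2+h^2)$. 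The nonnegativity is immediate from minimality of $\Omega$.
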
    

The next proposition shows that an open manifold with uniformly positive scalar curvature admits a special exhaustion, see \cite[Section 3.7.2]{GromovFourLectures} (cf. \cite[Proposition 3.1]{ChodoshMaximoMukherjeeComplete4}).

\begin{prop}\label{exhaustion}
    Fix a constant $\kappa>0$. Let $(X^{n+1},g)$, $2\leq n\leq 6$, be an open Riemannian $(n+1)$-manifold with scalar curvature $\Scal_X\geq \kappa>0$. There exists an exhaustion $\Omega_1\subset \Omega_2\subset \Omega_3 \subset \cdots $ with $M=\cup_{j=1}^\infty \Omega_i$ where each $\Omega_i$ is a compact codimension zero submanifold with smooth boundary $\partial \Omega_i$ such that $\partial \Omega_i$ supports a Riemannian metric with positive scalar curvature.
\end{prop}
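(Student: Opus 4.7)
The plan is to execute the classical $\mu$-bubble exhaustion argument of Gromov, in the form used by Chodosh--Máximo--Mukherjee \cite{ChodoshMaximoMukherjeeComplete4}. Start with any exhaustion $V_1 \Subset V_2 \Subset \cdots$ of $X$ by precompact open sets with smooth boundary and $X = \bigcup_j V_j$. For each $j$, the goal is to build a compact codimension-zero submanifold $\Omega_j \supset V_j$ whose boundary arises as a $\mu$-bubble inside a sufficiently wide annular region surrounding $V_j$. Since $X$ is non-compact and (implicitly) complete, I can enlarge $V_j$ to a compact submanifold $W_j$ with smooth boundary so that the annulus $U_j = W_j \setminus \overline{V_j}$ contains a tubular neighborhood of $\partial V_j$ of $g$-distance at least $L$, where $L = L(\kappa) > 0$ is a constant to be chosen below. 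Label its two boundary components $\partial_+ U_j = \partial V_j$ and $\partial_- U_j = \partial W_j$.

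On $U_j$ I choose $h_j \in C^\infty(\mathrm{int}(U_j))$ with $h_j \to \pm\infty$ at $\partial_\pm U_j$ satisfying the pointwise estimate
\[
h_j^2 - 2|\nabla h_j| \geq -\kappa + \tfrac{\kappa}{2}.
\]
A concrete model is $h_j = -\sqrt{\kappa/2}\,\tan(\pi \rho / L)$ with $L = 2\pi\sqrt{2/\kappa}$, where $\rho: U_j \to (-L/2, L/2)$ is a smoothing of signed distance to the central slice of $U_j$ satisfying $|\nabla \rho|\leq 1$; the identity $(\tan t)' = 1 + \tan^2 t$, together with the choice of $L$, makes the pointwise bound hold. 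Applying Proposition \ref{p: existence} produces a $\mu$-bubble $\Omega_j' \subset U_j$ with $\partial_+ U_j \subset \Omega_j'$, whose boundary $\Sigma_j$ is a smooth closed hypersurface of $\mathrm{int}(U_j)$. Setting $\Omega_j = V_j \cup \Omega_j'$ produces a compact codimension-zero submanifold of $X$ with smooth boundary $\Sigma_j$ containing $V_j$, and (after passing to a subsequence) $\{\Omega_j\}$ exhausts $X$.

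It remains to show each $\Sigma_j$ admits a Riemannian metric of positive scalar curvature. Combining Lemma \ref{lemm:2nd-var} with $R_X \geq \kappa$, $|A_{\Sigma_j}|^2 \geq 0$, the pointwise bound on $h_j$, and rearranging the $R_{\Sigma_j}$ term yields, for every $\psi \in C^\infty(\Sigma_j)$,
\[
\int_{\Sigma_j}\!\bigl(|\nabla_{\Sigma_j}\psi|^2 + \tfrac{1}{2} R_{\Sigma_j}\psi^2\bigr) \geq \tfrac{1}{2}\int_{\Sigma_j}\!\bigl(R_X + h_j^2 - 2|\nabla h_j|\bigr)\psi^2 \geq \tfrac{\kappa}{4}\int_{\Sigma_j}\psi^2.
\]
For $n \geq 3$, the conformal-Laplacian constant $c_n := 4(n-1)/(n-2) > 2$, so adding the nonnegative term $(c_n-2)\int |\nabla_{\Sigma_j} \psi|^2$ to twice the displayed inequality gives $\int_{\Sigma_j}(c_n|\nabla_{\Sigma_j}\psi|^2 + R_{\Sigma_j}\psi^2) \geq \tfrac{\kappa}{2}\int_{\Sigma_j}\psi^2$, hence $\lambda_1(-c_n\Delta_{\Sigma_j} + R_{\Sigma_j}) > 0$, and conformally deforming via the positive first eigenfunction yields a PSC metric on $\Sigma_j$ (the Schoen--Yau trick). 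For $n = 2$, testing the inequality with $\psi \equiv 1$ and applying Gauss--Bonnet shows $\chi(\Sigma_j) > 0$, so $\Sigma_j$ is a disjoint union of $2$-spheres and projective planes, each supporting a PSC metric. The main obstacle is the careful coupling between the pointwise bound on $h_j$ and the required width of $U_j$: we need $L$ of order $1/\sqrt{\kappa}$ to both secure the warping-function estimate and to fit inside $X$, which is exactly where the uniform lower bound $\kappa$ on $R_X$ and the completeness of $g$ enter.
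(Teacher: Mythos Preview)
Your argument is correct and is essentially the paper's approach: the paper simply says the exhaustion follows ``from the iterated use of the Separation Theorem'' (\Cref{separationthm}), and you have inlined that $\mu$-bubble construction directly, with a slightly cleaner choice of constants. The only minor slip is that a smoothing of signed distance satisfies $|\nabla\rho|\le 1+\eps$ rather than $|\nabla\rho|\le 1$, so one should take the annulus marginally wider (or adjust the tangent coefficient) to preserve the inequality $h_j^2-2|\nabla h_j|\ge -\kappa/2$; this is exactly how the paper handles it and does not affect the argument.
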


The preceding proposition follows immediately from the iterated use of the Separation Theorem of Gromov \cite[Section 3.7]{GromovFourLectures}, which we state next (see also \cite[Proposition 3.10]{ChodoshMaximoMukherjeeComplete4})
   
\begin{prop}\label{separationthm}
    Fix a constant $\kappa>0$. Let $(X^{n+1},g)$, $2\leq n\leq 6$, be a Riemannian $(n+1)$-manifold with boundary. Assume that $\partial X \neq \emptyset$ and that $\partial X$ has at least two boundary components. Let $\partial X = \partial_-X \sqcup \partial_+X$ where $\partial_\pm X \neq \emptyset$ are unions of connected components of the boundary. Assume that the scalar curvature satisfies $\Scal_X\geq\kappa >0$. Then there is a constant $C(\kappa)=\max\left\{3\pi,\frac{5\pi}{2\kappa}\right\}$ such that if $d(\partial_-X,\partial X_+)> C$, there exists a smooth embedded closed 2-sided hypersurface $\Sigma\subset \mathrm{int}(X) $ that separates $\partial_-X$ from $\partial_+X$ and supports a Riemannian metric with positive scalar curvature.
\end{prop}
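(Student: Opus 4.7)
The plan is to apply the $\mu$-bubble machinery of \Cref{p: existence} and \Cref{lemm:2nd-var} with a cotangent-type warping function on a tubular slab. Let $\rho(x)$ be a smoothed version of $\dist(x,\partial_-X)$ with $|\nabla\rho|\le 1$. The hypothesis $d(\partial_-X,\partial_+X)>C(\kappa)$ guarantees the existence of a regular value $L_0$ of $\rho$ with $2\pi/\sqrt{\kappa}<L_0<d(\partial_-X,\partial_+X)$, since a short case analysis shows $C(\kappa)=\max\{3\pi,5\pi/(2\kappa)\}$ exceeds $2\pi/\sqrt{\kappa}$ throughout the range of $\kappa$. Let $X':=\{\rho\le L_0\}$, a compact $(n+1)$-submanifold of $X$ with boundary $\partial X'=\partial_-X\sqcup\{\rho=L_0\}$. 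On $\mathrm{int}(X')=\{0<\rho<L_0\}$, define
\[
h(x) \,=\, -\frac{2\pi}{L_0}\cot\!\left(\frac{\pi\rho(x)}{L_0}\right),
\]
which is smooth, satisfies $h\to-\infty$ as $\rho\to 0^+$, and $h\to+\infty$ as $\rho\to L_0^-$. Using the identity $\csc^2=1+\cot^2$, a direct calculation yields the pointwise bound $|\nabla h|\le 2\pi^2/L_0^2+h^2/2$, and therefore
\[
\Scal_X+h^2-2|\nabla h|\;\ge\;\kappa-\frac{4\pi^2}{L_0^2}\;=:\;\varepsilon_0>0
\]
throughout $\mathrm{int}(X')$.

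Next, I would choose an open Caccioppoli set $\Omega_0\subset X'$ containing a collar of $\{\rho=L_0\}$ with smooth boundary $\partial\Omega_0\Subset\mathrm{int}(X')$, and apply \Cref{p: existence} on $X'$ with $\partial_+X':=\{\rho=L_0\}$ and $\partial_-X':=\partial_-X$ to obtain a $\mu$-bubble $\Omega$ with smooth boundary $\Sigma:=\partial\Omega\subset\mathrm{int}(X')\subset\mathrm{int}(X)$. Since $\Omega\triangle\Omega_0\Subset\mathrm{int}(X')$, the hypersurface $\Sigma$ is closed, embedded, and 2-sided (it bounds the Caccioppoli set $\Omega$). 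Extending $\Omega$ across $\{\rho=L_0\}$ to include $\{\rho>L_0\}\subset X$ produces a set containing $\partial_+X$ while still avoiding a neighborhood of $\partial_-X$, so $\Sigma$ separates $\partial_+X$ from $\partial_-X$ in $X$.

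The core step is the second variation inequality from \Cref{lemm:2nd-var}: for every $\psi\in C^\infty(\Sigma)$,
\[
\int_\Sigma\bigl(2|\nabla_\Sigma\psi|^2+\Scal_\Sigma\psi^2\bigr)\,d\mathcal{H}^n\;\ge\;\int_\Sigma\bigl(\Scal_X+|A_\Sigma|^2+h^2+2\langle\nabla h,\nu\rangle\bigr)\psi^2\,d\mathcal{H}^n\;\ge\;\varepsilon_0\int_\Sigma\psi^2\,d\mathcal{H}^n,
\]
after discarding $|A_\Sigma|^2\ge 0$ and inserting the pointwise estimate. For $n\ge 3$, since the Yamabe constant $a_n:=4(n-1)/(n-2)$ satisfies $a_n\ge 2$, this shows $\lambda_1(-a_n\Delta_\Sigma+\Scal_\Sigma)>0$, and a conformal change of metric (Kazdan--Warner) produces a Riemannian metric on $\Sigma$ with positive scalar curvature. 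For $n=2$, testing with $\psi\equiv 1$ on each component and invoking Gauss--Bonnet yields $4\pi\chi(\Sigma_i)\ge\varepsilon_0\vol(\Sigma_i)>0$ for every component $\Sigma_i$, so each is diffeomorphic to $\bS^2$ or $\bRP^2$, both of which admit a PSC metric.

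The main obstacle in this plan is the calibration of the warping function. The specific coefficient $c=2\pi/L_0$ is chosen precisely to cancel the $h^2$ coefficient in $|\nabla h|\le c\pi/L_0+\pi h^2/(cL_0)$, after which $L_0>2\pi/\sqrt{\kappa}$ is what makes $\varepsilon_0$ positive. The stated constant $C(\kappa)=\max\{3\pi,5\pi/(2\kappa)\}$ is then simply a convenient upper bound exceeding $2\pi/\sqrt{\kappa}$ uniformly in $\kappa$, so that such an $L_0$ is available.
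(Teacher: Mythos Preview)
Your proof is correct and follows essentially the same $\mu$-bubble strategy as the paper: build a prescribed-mean-curvature function $h$ of tangent/cotangent type on a slab, invoke \Cref{p: existence} and \Cref{lemm:2nd-var}, and conclude via the conformal Laplacian (or Gauss--Bonnet when $n=2$). The only differences are cosmetic: the paper works directly on $X$ with $h(x)=\tan\bigl(\tfrac{\rho(x)-C/2}{C}\pi\bigr)$ (unscaled), while you truncate to $X'=\{\rho\le L_0\}$ and use the scaled cotangent $h=-\tfrac{2\pi}{L_0}\cot(\pi\rho/L_0)$, which makes the $h^2$ cancellation exact and yields the cleaner threshold $L_0>2\pi/\sqrt\kappa$; your subsequent observation that $C(\kappa)>2\pi/\sqrt\kappa$ for all $\kappa>0$ then recovers the stated constant. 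One small point: a smoothing of the distance function will only satisfy $|\nabla\rho|\le 1+\varepsilon$, not $|\nabla\rho|\le 1$, but the strict inequality $C(\kappa)>2\pi/\sqrt\kappa$ gives enough slack to absorb this.
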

For the sake of completeness, we provide a quick sketch of the proof of \Cref{separationthm}.
\begin{proof}
 Fix $0<\eps<\frac{1}{4}$, then there exist smooth functions $d_{\pm}:X\to \bR$ which agree with $d(\cdot,\partial_\pm X):X\to\bR$ in a small neighborhood of $\partial_\pm X$ (respectively) and satisfy $|d_\pm(x)-d(x,\partial_\pm X)|<\eps$ and $|\Lip(d_\pm)|\leq 1+\eps$ for all $x\in X$.   
    
Let $\delta:=d(\partial_+X,\partial_-X)-C>0$. The goal now is to construct a function using $d_\pm$ that goes to $+\infty$ on $\partial_+X$ and $-\infty$ on $\partial_-X$. Define two smooth functions $\tau_-,\tau_+:\bR\to\bR$ with the following properties (see \Cref{tau+_tau_-}).
    \begin{equation}
       \left\{\begin{array}{l}
            \tau_{-}(0)=0 \text{ and } \tau_{-}(t)>0 \text{ for } t>0 \\
             \tau_{-} \text{ is non-decreasing} \\
             \tau_{-}(t)=C \text{ for } t \geq C+\delta\\
             |\Lip(\tau_{-})|<1.
        \end{array}\right.
        \qquad
       \left\{\begin{array}{l}
            \tau_{+}(0)=C \text{ and } \tau_{2}(t)<C \text{ for } t>0 \\
             \tau_{+} \text{ is non-increasing} \\
             \tau_{+}(t)=0 \text{ for } t\geq C+\delta\\
             |\Lip(\tau_{+})|<1.
        \end{array}\right.
    \end{equation}
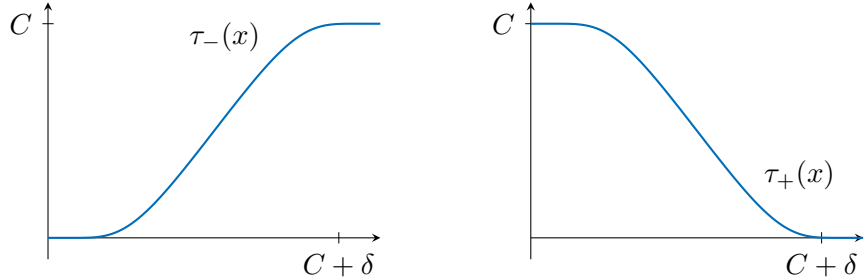
\begin{figure}[H]
\centering
\begin{tikzpicture}
  \begin{groupplot}[
    group style={
      group size=2 by 1,
      horizontal sep=2cm,
    },
    width=6cm, height=5cm,
    domain=0.5:2.5, samples=300,
    xmin=0.5, xmax=2.5, ymin=-0.1, ymax=1.1,
    xtick={0.5,2.25}, xticklabels={$0$,$C+\delta$},
    ytick={0,1}, yticklabels={$0$,$C$},
    axis lines=middle,     
    clip=false,  
    tick style={black},
    every axis x label/.style={at={(axis description cs:0.95,0.03)},anchor=north},
    every axis y label/.style={at={(axis description cs:0.03,0.95)},anchor=east},
  ]

    \nextgroupplot[
    ]
    \addplot[NavyBlue, thick] 
      {ifthenelse(
          x<=0.5, 
          0, 
          ifthenelse(
            x>=2.5, 
            1, 
            exp(-1/(x/2 - 0.25))
            /
            ( exp(-1/(x/2 - 0.25)) 
              + exp(-1/(1 - (x/2 - 0.25))) )
          )
      )} node[pos=0.7, above left, black] {$\tau_-(x)$};

    \nextgroupplot[
    ]
    \addplot[NavyBlue, thick] 
      {ifthenelse(
          x<=0.5,  
          1, 
          ifthenelse(
            x>=2.5, 
            0, 
            1 - (
              exp(-1/(x/2 - 0.25))
              /
              ( exp(-1/(x/2 - 0.25)) 
                + exp(-1/(1 - (x/2 - 0.25))) )
            )
          )
      )}
node[pos=0.7, above right, black] {$\tau_+(x)$};
  \end{groupplot}
\end{tikzpicture}
\caption{Graphs of $\tau_-$ and $\tau_+$}\label{tau+_tau_-}
\end{figure}
\noindent Define now
\begin{equation}
    \rho(x)=(1-\delta)\tau_{-}(d_-(x)) +\delta\tau_{+}(d_+(x)).
\end{equation}
Therefore, $\rho$ has the following properties (see \Cref{rhograph}): 
\begin{enumerate}[label=(\alph*)]
    \item $\rho|_{\partial_-X}=0$.
    \item $\rho|_{\partial_+X}=C$.
    \item $0\leq \rho(x)\leq C$ for $x\in X$.
    \item $|\Lip(\rho)|<1+\eps$.
\end{enumerate}

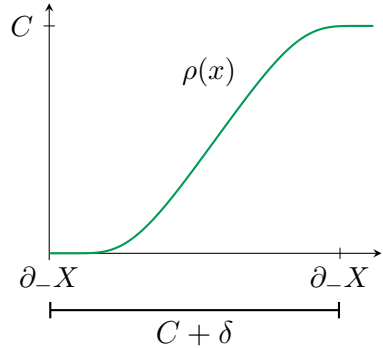
\begin{figure}[H]
    \centering
    \begin{tikzpicture}
  \begin{axis}[
    width=6cm, height=5cm,
    domain=0.5:2.5, samples=300,
    xmin=0.5, xmax=2.5, ymin=-0.03, ymax=1.1,
    xtick={0.5,2.25}, xticklabels=\empty,
    ytick={0,1}, yticklabels={$0$,$C$},
    axis lines=middle,     
    clip=false,  
    tick style={black},
    every axis x label/.style={at={(axis description cs:0.95,0.03)},anchor=north},
    every axis y label/.style={at={(axis description cs:0.03,0.95)},anchor=east},
  ]

     \addplot[ForestGreen, thick, domain=0.5:1.0]
{ifthenelse(
          x<=0.5, 
          0, 
          ifthenelse(
            x>=2.5, 
            1, 
            exp(-1/(x/2 - 0.25))
            /
            ( exp(-1/(x/2 - 0.25)) 
              + exp(-1/(1 - (x/2 - 0.25))) )
          )
      )};
      \addplot[ForestGreen, thick, domain=1.0:2.0] 
{ifthenelse(
          x<=0.5, 
          0, 
          ifthenelse(
            x>=2.5, 
            1, 
            exp(-1/(x/2 - 0.25))
            /
            ( exp(-1/(x/2 - 0.25)) 
              + exp(-1/(1 - (x/2 - 0.25))) )
          )
      )} node[pos=0.7, above left, black] {$\rho(x)$};
       \addplot[ForestGreen, thick, domain=2.0:2.45] 
      {ifthenelse(
          x<=0.5, 
          0, 
          ifthenelse(
            x>=2.5, 
            1, 
            exp(-1/(x/2 - 0.25))
            /
            ( exp(-1/(x/2 - 0.25)) 
              + exp(-1/(1 - (x/2 - 0.25))) )
          )
      )};
      \node[below, black] at (axis cs:0.5, -0.015) {$\partial_- X$};\node[below, black] at (axis cs:2.25, -0.015) {$\partial_- X$};
  \draw[|-|, thick] (axis cs:0.5,-0.25) -- (axis cs:2.25,-0.25);
  \node at (axis cs:1.375,-0.35) {\large $C + \delta$};
  \end{axis}
\end{tikzpicture}
\caption{Graph of $\rho$}\label{rhograph}
\end{figure}
\noindent Finally, define the smooth function $h:X\to \bR$
\begin{equation}
    h(x):=\tan\left(\frac{\rho(x)-\frac{1}{2}C}{C}\pi\right)
\end{equation}
and note that $h\to\pm\infty$ on $\partial_\pm X$, respectively  (see \Cref{hgraph}).
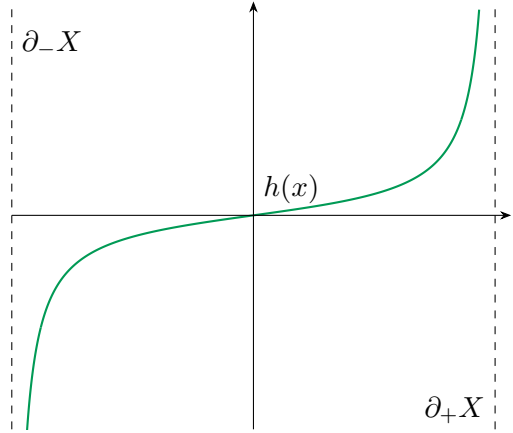
\begin{figure}[H]
\begin{center}
\begin{tikzpicture}
  \begin{axis}[
    axis lines=middle,
    axis x line=none,
    axis y line=none,
    trig format plots=rad,        
    xmin=-0.1, xmax=3.1,
    ymin=-10, ymax=10,
    domain=0.01:2.99,
    samples=1000,
    unbounded coords=jump,
    xtick=\empty, 
    ytick=\empty,
    every axis x label/.style={at={(axis description cs:0.95,0.03)},anchor=north},
    every axis y label/.style={at={(axis description cs:0.03,0.95)},anchor=east},
  ]
    \addplot[ForestGreen, thick, domain=0.01:.18] 
      {tan((x - 1.5)/3 * pi)};
      
    \addplot[ForestGreen, thick, domain=.18:2.79] 
      {tan((x - 1.5)/3 * pi)};
      
    \addplot[ForestGreen, thick, domain=2.79:2.901] 
      {tan((x - 1.5)/3 * pi)};

    \draw[dashed] (axis cs:0,-10) -- (axis cs:0,10);
    \draw[dashed] (axis cs:3,-10) -- (axis cs:3,10);
    \draw[-{Stealth[length=4.25pt, width=3.25pt]}] (axis cs:1.5,-10) -- (axis cs:1.5,10);
     \draw[-{Stealth[length=4.25pt, width=3.25pt]}] (axis cs:0,0) -- (axis cs:3.1,0);

    \node[anchor=west] at (axis cs:0,8) {$\partial_- X$};
    \node[anchor=north east] at (axis cs:3,-8) {$\partial_+ X$};
    \node[anchor=south west] at (axis cs:1.5,0.1) {$h(x)$};
  \end{axis}
\end{tikzpicture}
\end{center}
\caption{A graph of $h$} \label{hgraph}
\end{figure}

With respect to this $h$, we can find a $\mu$-bubble ${\Omega}$ with boundary $\partial\Omega=\Sigma$ by using \Cref{p: existence}.

By \Cref{lemm:2nd-var}, for every $\psi\in C^\infty_c(\Sigma)$, 
\begin{align}
\int_{\Sigma} \left(|\nabla_\Sigma \psi|^2-  \tfrac{1}{2} (R_X - R_\Sigma + |A_\Sigma|^2 + h^2 +2\langle \nabla h, \nu \rangle )\psi^2 \right) d\cH^{n-1}\geq 0.
\end{align}
Therefore,
\begin{align}\label{e: equation}
    \int_{\Sigma} \left(|\nabla_\Sigma \psi|^2+\tfrac{1}{2} R_{\Sigma} \right)\psi^2 d\cH^{n-1}\geq \int_{\Sigma}\frac{1}{2} \left( R_X+|A_\Sigma|^2 + h^2 +2\langle \nabla h, \nu \rangle )\psi^2 \right) d\cH^{n-1}.
\end{align}
Now, we compute:
\begin{align*}
     h^2 +2\langle \nabla h, \nu \rangle &\geq  h^2 -2| \nabla h|\\
     &=\tan^2\left(\frac{\rho(x)-\frac{1}{2}C}{C}\pi\right) -\frac{2|\nabla\rho(x)|\pi}{C}\sec^2\left(\frac{\rho(x)-\frac{1}{2}C}{C}\pi\right)\\
     &\geq \tan^2\left(\frac{\rho(x)-\frac{1}{2}C}{C}\pi\right) -\frac{2 \pi}{C}(1+\eps)\sec^2\left(\frac{\rho(x)-\frac{1}{2}C}{C}\pi\right)\\
     &> \frac{2\pi}{C}(1+\eps)\tan^2\left(\frac{\rho(x)-\frac{1}{2}C}{C}\pi\right) -\frac{2\pi}{C}(1+\eps) \sec^2\left(\frac{\rho(x)-\frac{1}{2}C}{C}\pi\right)\\
     &\geq -\frac{5\pi}{2C},
\end{align*}
where the penultimate inequality follows since $C>2\pi$ and the final inequality follows by the trigonometric identity $\tan^2(\theta)-\sec^2(\theta)=-1$ and $\eps\in\left(0,\frac{1}{4}\right)$.

Therefore, by \eqref{e: equation}, we have
\[
    \int_{\Sigma} \left(|\nabla_\Sigma \psi|^2+\tfrac{1}{2} R_{\Sigma} \right)\psi^2 d\cH^{n-1}>\frac{1}{2} \int_{\Sigma} \left(R_X -\frac{5\pi}{2C} \right)\psi^2 d\cH^{n-1}.
\]
Since $C(\kappa)=\max\left\{3\pi,\frac{5\pi}{2\kappa}\right\}$, we conclude that $\int_{\Sigma} \left(|\nabla_\Sigma \psi|^2+\tfrac{1}{2} R_{\Sigma} \right)\psi^2 d\cH^{n-1}>0$. When $n=2$, we see from the Gauss--Bonnet formula and choosing $\psi\equiv 1$ that $\Sigma$ supports a Riemannian metric with positive scalar curvature. For $n\geq 3$, we can use the first eigenfunction of the conformal Laplacian to produce a Riemannian metric on $\Sigma$ with positive scalar curvature.
\end{proof}

Next, we would like to record a result of Chodosh--Li--Liokumovich \cite{ChodoshLiLiokumovich45}.

\begin{theorem}\label{dim45mapping}\label{ClassifyChodoshLiLIokomovich}
    Suppose $(M^n,g)$ is a closed $n$-manifold supporting a Riemannian metric with positive scalar curvature, and there exists a map with non-zero degree, $f:M\to N$, to an $n$-manifold $N$ satisfying
    \begin{enumerate}
        \item $n=4$ and $\pi_2(N)=0$, or
        \item $n=5$ and $\pi_2(N)=\pi_3(N)=0$.
    \end{enumerate}
    Then there exists a finite cover $\hat{N}$ of $N$ which is homotopy equivalent to $\bS^n$ or a connected sum of finitely many copies of $\bS^{n-1}\times \bS^1$.
\end{theorem}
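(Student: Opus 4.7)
The plan is to combine $\mu$-bubble techniques (via the second variation formula \Cref{lemm:2nd-var} and the existence and separation results \Cref{p: existence} and \Cref{separationthm}) with the high connectivity of the universal cover of $N$, using the non-zero degree map $f$ to shuttle topological information from $M$ to $N$. The guiding principle is that PSC obstructs certain classes from appearing in PSC-carrying hypersurfaces, while vanishing of low homotopy groups of $N$ trivializes many candidate classes in its universal cover. First, I would pass to the universal cover: under the stated hypotheses, $\widetilde{N}$ is $2$-connected (for $n=4$) or $3$-connected (for $n=5$), since $\pi_1(\widetilde{N})=0$ and $\pi_k(\widetilde{N})\cong\pi_k(N)$ for $k\ge 2$. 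The pullback cover $\widetilde{M}$ of $M$ inherits a PSC metric together with a lifted map $\widetilde{f}:\widetilde{M}\to\widetilde{N}$.

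Next, I would slice $\widetilde{M}$ with $\mu$-bubbles. When $n=4$, the slices are $3$-manifolds with induced PSC (after the conformal rescaling used in the proof of \Cref{separationthm}), hence connected sums of spherical space forms with copies of $\bS^2\times\bS^1$ by Perelman and Gromov--Lawson--Schoen--Yau. Any $\bS^2$-summand maps null-homotopically into $\widetilde{N}$ because $\pi_2(\widetilde{N})=0$, so these contribute trivially, and one is left with essentially a circular handle structure governing $\pi_1$. When $n=5$, the slices are PSC $4$-manifolds, and one iterates one level deeper: cut further with a $3$-dimensional $\mu$-bubble inside each slice and use $\pi_3(\widetilde{N})=0$ to kill $3$-sphere classes. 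In both cases, this forces $\widetilde{N}$ to have the homotopy type of a wedge of circles and top-dimensional spheres (possibly empty), which implies $\pi_1(N)$ is either finite or virtually free.

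Finally, assembling this globally: if $\pi_1(N)$ is finite, the universal cover is a homotopy $\bS^n$; if $\pi_1(N)$ is virtually free, a finite cover corresponding to the free-product structure is homotopy equivalent to a connected sum of copies of $\bS^{n-1}\times\bS^1$. Whitehead's theorem upgrades homological equivalence to homotopy equivalence, giving the classification. The main obstacle is the slicing step: one must arrange the $\mu$-bubbles in $\widetilde{M}$ so that $\widetilde{f}$ restricted to each slice carries essential topological information about $\widetilde{N}$. This requires a delicate choice of the warping function $h$ in \eqref{e: mububble}, typically built from distance-like functions pulled back from $\widetilde{N}$ via $\widetilde{f}$, together with careful use of the stability inequality \Cref{lemm:2nd-var} to ensure the induced metric on each slice is PSC. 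In the $n=5$ case, maintaining non-triviality of the map to $\widetilde{N}$ across two layers of $\mu$-bubble slicing is the technical crux.
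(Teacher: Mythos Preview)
The paper does not prove this theorem at all; it is recorded verbatim as a result of Chodosh--Li--Liokumovich \cite{ChodoshLiLiokumovich45} and used as a black box in the subsequent corollaries. There is therefore no ``paper's own proof'' to compare your attempt against.

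That said, your sketch contains a genuine internal confusion. You pass to the universal cover $\widetilde{N}$, correctly noting that it is simply connected, and then two paragraphs later assert that the slicing argument forces $\widetilde{N}$ to have the homotopy type of a wedge of circles and top spheres. A simply connected space cannot have circles as wedge summands, so this cannot be the output of the argument. What the $\mu$-bubble slicing in the original Chodosh--Li--Liokumovich paper actually produces is information about the \emph{macroscopic dimension} (or Urysohn width) of $\widetilde{N}$: the PSC hypothesis on $M$, transferred through the non-zero-degree map, forces $\widetilde{N}$ either to be compact (hence a homotopy $\bS^n$ by high connectivity and Poincar\'e duality) or to be ``thin'' in a way that, combined with the connectivity hypotheses, constrains $\pi_1(N)$ to be virtually free. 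The passage from ``virtually free fundamental group plus $\pi_2=0$ (and $\pi_3=0$)'' to ``finite cover homotopy equivalent to $\#_k(\bS^{n-1}\times\bS^1)$'' is then a separate topological step involving Stallings' theorem on ends of groups and an explicit homotopy classification, not something that falls out of the $\mu$-bubble picture directly. Your outline conflates the geometric input (width bounds from stability) with the group-theoretic and homotopy-theoretic conclusions, and the ``wedge of circles'' line is where this conflation becomes an actual error.
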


\begin{remark}
    By choosing $f$ to be the identity map, one recovers the classification result of Chodosh, Li, and Liokumovich for 4- and 5-manifolds, that was discussed in \Cref{subsubsecSC}.
\end{remark}

We now have the following important new corollary to \Cref{dim45mapping}.

\begin{cor}\label{CorChodoshLiLiokomovich}
    Let $X^{n+1}$, $n\in\{4,5\}$,  be a compact, contractible $n$-manifold with boundary such that the interior of $X$ supports a complete Riemannian metric with uniformly positive scalar curvature. If $n=4$, further assume $\pi_3(X,\partial X)=0$, and if $n=5$, further assume $\pi_3(X,\partial X)=0$ and $\pi_4(X,\partial X)=0$. Then the boundary $\partial X$ has a finite cover that is homotopy equivalent to $\bS^n$ or a connected sum of finitely many copies of $\bS^{n-1}\times \bS^1$.
\end{cor}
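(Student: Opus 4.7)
My plan is to combine \Cref{dim45mapping} with the exhaustion result of \Cref{exhaustion}, using the smooth collar of $\partial X$ in $X$ to extract a closed $n$-manifold of positive scalar curvature that admits a non-zero degree map to $\partial X$. First, I observe that \Cref{boundaryconnectedness}, combined with the hypothesis $\pi_3(X, \partial X) = 0$ (and $\pi_4(X, \partial X) = 0$ when $n = 5$), yields $\pi_2(\partial X) = 0$ (and $\pi_3(\partial X) = 0$ when $n = 5$). These are exactly the topological hypotheses on the target required by \Cref{dim45mapping}, so once a source manifold with positive scalar curvature and a non-zero degree map to $\partial X$ are produced, the corollary will follow.

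Next I would fix a smooth collar embedding $c : \partial X \times [0, 1) \hookrightarrow X$, with associated projection $\pi : c(\partial X \times [0, 1)) \to \partial X$, and set $K := X \setminus c(\partial X \times [0, 1/2))$, a compact codimension-zero submanifold of $\mathrm{int}(X)$. Applying \Cref{exhaustion} to the complete uniformly positive scalar curvature metric on $\mathrm{int}(X)$ produces an exhaustion $\Omega_1 \subset \Omega_2 \subset \cdots$ of $\mathrm{int}(X)$ by compact codimension-zero submanifolds with smooth boundary, where each $\partial \Omega_i$ carries a Riemannian metric of positive scalar curvature. For $i$ large enough $K \Subset \mathrm{int}(\Omega_i)$, and hence $\partial \Omega_i$ lies entirely inside the open collar $c(\partial X \times (0, 1/2))$.

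The main step is a degree computation. I would consider the compact oriented cobordism $W_i := X \setminus \mathrm{int}(\Omega_i) \subset c(\partial X \times [0, 1/2))$, whose oriented boundary is $\partial X - \partial \Omega_i$. Restricting $\pi$ over $W_i$ and applying the standard bordism invariance of degree, via the relation $[\partial X] - [\partial \Omega_i] = 0$ in $H_n(W_i; \bZ)$, yields $\deg(\pi|_{\partial \Omega_i}) = \deg(\pi|_{\partial X}) = 1$. Since this total degree splits as a sum of degrees over the connected components of $\partial \Omega_i$, there exists a component $\Sigma \subset \partial \Omega_i$ with $\deg(\pi|_\Sigma) \neq 0$; positive scalar curvature being pointwise, $\Sigma$ inherits a positive scalar curvature metric from $\partial \Omega_i$.

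Finally, the closed $n$-manifold $\Sigma$ together with the map $\pi|_\Sigma : \Sigma \to \partial X$ satisfies all the hypotheses of \Cref{dim45mapping}, which will produce the desired finite cover of $\partial X$ homotopy equivalent to $\bS^n$ or to a connected sum of finitely many copies of $\bS^{n-1} \times \bS^1$. The only delicate step I anticipate is the degree computation, which relies on orientation bookkeeping and on the fact that $W_i$ sits inside the domain of the collar retraction; once $\Omega_i$ contains $K$, both points are straightforward.
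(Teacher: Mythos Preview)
Your proposal is correct and follows essentially the same route as the paper: use the collar of $\partial X$, pick a term $\Omega_i$ of the exhaustion from \Cref{exhaustion} whose boundary lies in the collar, observe that the collar projection restricts to a non-zero degree map $\partial\Omega_i\to\partial X$ via a cobordism/Stokes argument, and feed this into \Cref{dim45mapping} after invoking \Cref{boundaryconnectedness}. Your version is in fact slightly more careful in one respect: you pass to a connected component $\Sigma\subset\partial\Omega_i$ of non-zero degree before applying \Cref{dim45mapping}, whereas the paper applies it directly to $\partial\Omega_i$; given the paper's standing convention that manifolds are connected, your extra step is a welcome clarification rather than a different argument.
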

\begin{proof}
   By \Cref{poincarehomology},  we have that $\partial X$ is an integral homology sphere; therefore, $\partial X$ is connected and orientable. By the collar neighborhood theorem, there is a closed set $U$ that contains the boundary, diffeomorphic to $\partial X \times [-1,1]$. For sufficiently large $i$, the exhaustion from \Cref{exhaustion} will have $\partial \Omega_i$ a hypersurface in $U$ separating $\partial X \times \{-1\}$ from $\partial X \times \{1\}$ and $\partial \Omega_i$ supports a Riemannian metric with positive scalar curvature.

   Consider now the projection map $\pi:U\to \partial X$ and its restriction $\pi|_{\partial \Omega_i}:\partial \Omega_i\to \partial X$. If $\omega$ is the volume form on $\partial X$, then $\int_{\partial\Omega_i}(\pi|_{\partial \Omega_i})^*\omega\neq 0$ and so $\pi|_{\partial \Omega_i}$ has non-zero degree. (One can see this since $\partial\Omega_i\subset U$ separates $\partial X \times \{-1\}$ from $\partial X \times \{1\}$, which implies there is an $(n+1)$-manifold $W\subseteq U$ with $\partial W= \partial \Omega_i \sqcup \partial X$.) 
   
   By \Cref{boundaryconnectedness}, if $n=4$, then $\pi_2(\partial X)=0$, and if $n=5$, then $\pi_2(\partial X)=0$ and $\pi_3(\partial X )=0.$  Now, apply \Cref{dim45mapping} to finish the proof.
\end{proof}
We would like to compare the proceeding corollary to a consequence of the following statement.

\begin{prop}\label{p: separtingpsc}
   Let $n\geq 2$ and  $n\neq 4$ and assume $Y^n$ is an $n$-dimensional closed, connected, oriented manifold and $\Sigma\subset Y\times [-1,1]$ is a closed embedded hypersurface that separates $Y\times \{-1\}$ from $Y\times\{1\}$. If $\Sigma$ supports a Riemannian metric with positive scalar curvature, then $Y$ supports a Riemannian metric with positive scalar curvature.
\end{prop}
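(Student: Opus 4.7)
The plan is to use a cobordism between $\Sigma$ and $Y$ sitting inside $Y\times[-1,1]$ and to transfer positive scalar curvature via Gromov--Lawson surgery. Let $\Omega_+$ denote the connected component of $(Y\times[-1,1])\setminus\Sigma$ containing $Y\times\{1\}$. After replacing $\Sigma$ with the component of its boundary closest to $Y\times\{1\}$ if necessary, I may assume $\Sigma$ is connected and $\partial\overline{\Omega}_+=\Sigma\sqcup Y$, so that $\overline{\Omega}_+$ is a cobordism from $\Sigma$ to $Y$. The projection $\pi\colon Y\times[-1,1]\to Y$ restricts to a retraction $\pi|_{\overline{\Omega}_+}\colon\overline{\Omega}_+\to Y$ with section $Y\cong Y\times\{1\}\hookrightarrow\overline{\Omega}_+$; in particular, $\pi|_\Sigma\colon\Sigma\to Y$ has degree one.

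\textbf{Low dimensions.} For $n=2$, Gauss--Bonnet forces $\Sigma\cong\bS^2$, and the degree-one map $\Sigma\to Y$ then makes $Y\cong\bS^2$, which admits PSC. For $n=3$, the classification of closed PSC 3-manifolds (Gromov--Lawson, Schoen--Yau, Perelman) says $\Sigma$ is a connected sum of spherical space forms and copies of $\bS^2\times\bS^1$. Writing $Y=Y_1\#\cdots\# Y_m$ for the prime decomposition of $Y$ and composing $\pi|_\Sigma$ with the collapse maps $Y\to Y_j$ yields degree-one maps $\Sigma\to Y_j$ for each $j$. Since no closed aspherical 3-manifold can be the target of a degree-one map from a closed PSC 3-manifold (via Gromov--Lawson enlargeability, or via the PSC classification combined with Perelman's geometrization), each $Y_j$ is spherical or $\bS^2\times\bS^1$, so $Y$ admits PSC.

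\textbf{Dimension $n\geq 5$.} I choose a Morse function $f\colon\overline{\Omega}_+\to[0,1]$ with $f^{-1}(0)=\Sigma$ and $f^{-1}(1)=Y$. The goal is to arrange that every critical point of $f$ has index at most $n-2$: each critical point of index $k$ corresponds to a surgery on the level set along an embedded $(k-1)$-sphere of codimension $n-k+1\geq 3$ with trivial normal bundle, to which the Gromov--Lawson surgery theorem applies. Starting from the given PSC metric on $\Sigma=f^{-1}(0)$ and sequentially performing the surgeries as $f$ ascends, I obtain a PSC metric on $Y=f^{-1}(1)$. To produce such an $f$, I first modify $\overline{\Omega}_+$ by attaching low-index handles in its \emph{interior} (which leave the boundary $\Sigma\sqcup Y$ unchanged), making the pair $(\overline{\Omega}_+,Y)$ at least $2$-connected; then Smale's handle cancellation theorem in dimension $\dim\overline{\Omega}_+=n+1\geq 6$ eliminates the critical points of index $0$, $1$, and $2$ in the dual Morse function $1-f$, dually giving the desired index bound on $f$.

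\textbf{Main obstacle.} The chief technical step is the interior-handle modification followed by Smale cancellation for $n\geq 5$. While the retraction $\pi|_{\overline{\Omega}_+}$ exhibits $\pi_k(Y)$ as a split summand of $\pi_k(\overline{\Omega}_+)$, the extra relative homotopy $\pi_\ast(\overline{\Omega}_+,Y)$ is generally non-trivial and must be killed in the low degrees by carefully chosen interior surgeries; the necessary framing conditions are available in dimension $n+1\geq 6$ via general position arguments. Once the pair is $2$-connected, Smale handle cancellation completes the construction of $f$. The dimension hypothesis $n\geq 5$ enters precisely to secure $\dim\overline{\Omega}_+\geq 6$ for both steps, which is why the case $n=4$ is excluded; the case $n=3$, where this machinery is unavailable, is instead handled through the deep classification of PSC 3-manifolds.
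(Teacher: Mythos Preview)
The paper does not prove this proposition; it simply records that $n=2$ follows from Gauss--Bonnet, $n=3$ from the Gromov--Lawson/Schoen--Yau/Perelman classification of closed PSC $3$-manifolds (referring to \cite[Proposition~2.2]{ChodoshMaximoMukherjeeComplete4}), and $n\geq 5$ from R\"ade \cite[Proposition~2.17]{RadeScalar}. Your treatment of $n=2,3$ matches this and supplies a reasonable amount of extra detail via the degree-one map; your reduction to connected $\Sigma$ is garbled (``the component of its boundary closest to $Y\times\{1\}$'' does not parse), but the intended fix---pass to the component of $\overline{\Omega}_+$ containing $Y\times\{1\}$ and take $\Sigma'=\partial\overline{\Omega}_+\setminus Y$---is routine.

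For $n\geq 5$ you go beyond the paper and sketch an actual argument, and the strategy---modify the cobordism so that $(\overline{\Omega}_+,Y)$ becomes $2$-connected, then cancel the $0$-, $1$-, and $2$-handles on the $Y$-side so that from the $\Sigma$-side only handles of index $\leq n-2$ remain---is the correct one and is essentially R\"ade's. But there is a real gap. ``Attaching low-index handles in the interior'' is not a meaningful operation; what you must do is \emph{interior surgery} on embedded $1$- and $2$-spheres representing $\ker\bigl(\pi_k(W)\to\pi_k(Y)\bigr)$ for $k=1,2$. Surgery on circles is unobstructed in an oriented manifold, but surgery on $2$-spheres requires trivial normal bundle, and the obstruction lives in $\pi_1(SO(n-1))\cong\bZ/2$; this is \emph{not} a general-position issue. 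The point that rescues the argument---and which you do not mention---is that the retraction $W\to Y$ makes each such $2$-sphere null-homotopic in $Y\times[-1,1]\supset W$, so its normal bundle is stably trivial, hence (rank $n-1\geq 4$) trivial. Only with this in hand can you achieve $2$-connectivity of $(W,Y)$, after which $H_2(W,Y;\bZ[\pi_1])=0$ and the Whitney-trick cancellation of $2$-handles in dimension $n+1\geq 6$ goes through.
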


For $n\in\{2, 3\}$, the proof of \Cref{p: separtingpsc} follows from the classification of closed 2- and 3-manifolds supporting a Riemannian metric with positive scalar curvature. Therefore, for $n=2$, it follows from the Gauss--Bonnet formula (cf. \cite[Proposition 2.1]{ChodoshMaximoMukherjeeComplete4}). For $n=3$, it follows from Gromov--Lawson \cite{GromovLawsontheclassification}, Schoen--Yau \cite{SchoenYauOntheStructure}, and Perelman \cite{PerelmanRicciFlowwithSurgery} (cf. \cite[Proposition 2.2]{ChodoshMaximoMukherjeeComplete4}). Finally, R\"ade proved the statement for $n\geq 5$ \cite[Proposition 2.17]{RadeScalar}. Moreover, there is a counterexample for $n=4$ \cite[Remark 1.25]{RosenbergManifolds}. Now, we state a corollary to \Cref{p: separtingpsc} which should be compared with \Cref{CorChodoshLiLiokomovich}. 

\begin{cor}\label{RadeCor}
     Let $X^{n+1}$, $n\geq 2$ and $n\neq 4$, be an orientable, compact $n$-manifold with boundary. Assume that the interior of $X$ supports a complete Riemannian metric with uniformly positive scalar curvature. Then each boundary component of $\partial X$ supports a Riemannian metric with positive scalar curvature.
\end{cor}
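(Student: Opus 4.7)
My plan is to combine \Cref{exhaustion} with the collar neighborhood theorem to produce, inside $\mathrm{int}(X)$, a closed embedded hypersurface carrying a positive-scalar-curvature metric and lying in a topological collar of each boundary component, and then to plug this hypersurface into \Cref{p: separtingpsc}.

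To set things up, I would enumerate the connected components $Y_1,\dots,Y_k$ of $\partial X$ and pick pairwise disjoint topological collars $U_j \cong Y_j \times [0,1]$ in $X$ with $Y_j \times \{0\} = Y_j$; each $Y_j$ is closed, connected, and orientable since $X$ is. The complement $K := X \setminus \bigsqcup_j (Y_j \times [0, 1/2))$ is a compact subset of $\mathrm{int}(X)$ that contains every slice $Y_j \times \{1\}$. Next, applying \Cref{exhaustion} to the complete uniformly positive scalar curvature metric on $\mathrm{int}(X)$, I would obtain an exhaustion $\Omega_1 \subset \Omega_2 \subset \cdots$ by compact codimension-zero submanifolds whose boundaries $\partial \Omega_i$ all support positive-scalar-curvature metrics. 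For $i$ large enough, $K$ is contained in the topological interior of $\Omega_i$, which forces $\partial \Omega_i \subset \bigsqcup_j (Y_j \times (0, 1/2))$.

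I would then fix such an $i$ together with a chosen boundary component $Y_j$ and observe that the vertical arcs $t \mapsto (y,t)$ in $U_j$ start at a point outside $\Omega_i$ (near $Y_j \times \{0\}$) and end at $(y,1) \in K \subset \Omega_i$, so each one must cross $\partial \Omega_i \cap U_j$. Taking an innermost separating component $\Sigma$ of $\partial \Omega_i \cap U_j$ yields a closed, connected, embedded, two-sided hypersurface in $Y_j \times (0,1/2)$ that separates $Y_j \times \{0\}$ from $Y_j \times \{1\}$, and which inherits a positive-scalar-curvature metric as a component of $\partial \Omega_i$. Identifying $U_j \cong Y_j \times [-1,1]$ via any diffeomorphism of the interval, I would then invoke \Cref{p: separtingpsc} to conclude that $Y_j$ itself supports a metric with positive scalar curvature. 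Since $j$ was arbitrary, this finishes the proof.

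The genuinely nontrivial analytic content lies entirely in \Cref{p: separtingpsc}, which in the range $n \geq 5$ is R\"ade's theorem and which explains exactly why $n = 4$ must be excluded. The PSC exhaustion supplied by \Cref{exhaustion} is the other essential ingredient; the remainder is straightforward topological bookkeeping with collars and the exhaustion property, so I do not anticipate a real obstacle.
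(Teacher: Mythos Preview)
Your proposal is correct and follows essentially the same approach as the paper, which simply says the corollary is proved in the same way as \Cref{CorChodoshLiLiokomovich} but invoking \Cref{p: separtingpsc} in place of \Cref{dim45mapping}. One minor remark: the step of extracting an ``innermost separating component'' is unnecessary, since \Cref{p: separtingpsc} does not require $\Sigma$ to be connected; you may simply take $\Sigma = \partial\Omega_i \cap U_j$, which separates the two ends of the collar and carries a PSC metric as a union of components of $\partial\Omega_i$.
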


This can be proved in a similar way as \Cref{CorChodoshLiLiokomovich}, where one uses \Cref{p: separtingpsc} instead of \Cref{dim45mapping} and \Cref{boundaryconnectedness}. \Cref{RadeCor} is valid in all dimensions except four and guarantees that each boundary component supports a Riemannian metric of positive scalar curvature (which is a restriction on the topology of the boundary). Now, \Cref{CorChodoshLiLiokomovich} provides a way to extend this result to dimension four (necessarily under further hypotheses) in order give a topological restriction on the boundary.

\subsubsection{Positive Isotropic Curvature} Here we will collect information about positive isotropic curvature.
Our choice of notation will follow that of Brendle \cite{BrendleRicciFlowTextBook}. 

Let $\nabla$ be the Levi-Civita connection on a Riemannian manifold $(M^n,g)$ and recall that the Riemann curvature tensor is 
\[
R(U,V)Z=\nabla_U\nabla_VZ-\nabla_V\nabla_UZ-\nabla_{[U,V]}Z,
\]
for all vector fields $U, V,$ and $Z$.
For each point $p\in M$, the curvature operator $\mathfrak{R}:\bigwedge^2T_pM\times\bigwedge^2T_pM\to \bR$ is defined by
\[
\mathfrak{R}(U\wedge V,Z\wedge W)=-g(R(U,V)Z,W).
\]
Denote $\mathfrak{R}(e_i\wedge e_j,e_k\wedge e_l)=:R_{ijkl}.$
Now, we can define PIC. 

\begin{deff}[Positive Isotropic Curvature]\label{PIC def}
A manifold $(M^n,g)$, $n\geq4$, has \emph{PIC} if
\[
R_{1313}+R_{1414}+R_{2323}+R_{2424}-2R_{1234}>0
\]
for all orthonormal four-frames $\{e_1,e_2,e_3,e_4\}$. 
\end{deff}

Now we would like to record a deformation result for PIC, which was proven by Chow \cite[Main Theorem 2]{ChowPositivity}.
\begin{theorem}\label{ChowTheoremdeform}
    Let $(X^{n+1},g)$ be a compact Riemannian $(n+1)$-manifold with boundary. If $(X,g)$ has PIC and 2-convex boundary, then there exists a Riemannian metric $\tilde{g}$ on $X$ such that $(X,\tilde{g})$ has PIC and totally geodesic boundary.
\end{theorem}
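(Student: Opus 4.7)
My approach would be to construct $\tilde g$ by a local deformation of $g$ in a small collar neighborhood of $\partial X$, keeping $g$ unchanged in the interior. Work in Fermi coordinates $g = dr^2 + g_r$ on a collar $\partial X \times [0,\delta]$, so that the second fundamental form of $\partial X$ satisfies $A = -\tfrac{1}{2}\partial_r g_r|_{r=0}$, and the 2-convexity assumption becomes: for any orthonormal pair $U,V \in T\partial X$, one has $A(U,U)+A(V,V) > 0$.

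The modification I have in mind is $\tilde g = dr^2 + g_{\phi(r)}$ on the collar, where $\phi : [0,\delta] \to [0,\delta]$ is a smooth non-decreasing function with $\phi(r) = r$ for $r \geq \delta/2$ and $\phi'(0) = 0$. Since $\partial_r \tilde g_r|_{r=0} = \phi'(0)\,\partial_r g_r|_{0} = 0$, the boundary becomes totally geodesic in $\tilde g$. Outside the collar, $\tilde g = g$, so the metric glues smoothly. One should leave enough flexibility in the ansatz (e.g.\ also warping $dr^2$) to handle the curvature bookkeeping, so I would in fact allow a two-parameter family $\tilde g = f(r)^2 dr^2 + g_{\phi(r)}$ and tune $(f,\phi)$.

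To verify that $\tilde g$ still has PIC throughout the collar, I would compute its Riemann tensor in terms of $g_r$, $\phi$, and their derivatives. The result has the schematic form
\[
\widetilde R_{ijkl} = \phi'(r)^2\, R^{g_{\phi(r)}}_{ijkl} + \text{(terms involving $\phi''$, $\partial_r g_r$, and $\partial_r^2 g_r$)},
\]
with the tangential block matching the ambient curvature up to $A$-quadratic corrections governed by the Gauss equation. At $r = \delta/2$ the metric is unchanged and so PIC holds by hypothesis; at $r=0$ the PIC expression
\[
\widetilde R_{1313}+\widetilde R_{1414}+\widetilde R_{2323}+\widetilde R_{2424}-2\widetilde R_{1234}
\]
reduces to an intrinsic expression on $\partial X$ plus a quadratic form in $A$. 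Writing the intrinsic piece via Gauss as $R_{PIC}^{X}|_{\partial X}$ minus the $A$-quadratic correction and recombining, one arrives at an inequality whose positivity needs the original PIC of $g$ together with the 2-convexity of $A$. A careful accounting of the $(A_{11}+A_{22})(A_{33}+A_{44}) - (A_{13}\pm A_{24})^2 - (A_{14}\mp A_{23})^2$ type terms (arising from expanding the PIC combination) is what forces the 2-convex hypothesis.

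The main obstacle is this last algebraic/analytic check: unlike scalar curvature, which is a single inequality, PIC must be verified for every orthonormal 4-frame at every point of the collar. The curvature of $\tilde g$ picks up intermediate terms involving $\phi'$, $\phi''$, and mixed derivatives of $g_r$ that are neither purely tangential nor purely normal, and these must be controlled uniformly in the frame. I would expect that by first choosing $\phi$ with $\phi'$ transitioning very slowly from $0$ to $1$ (so that $\phi''$ is small) and then exploiting the strict PIC bound and strict 2-convexity to absorb the extra terms, the verification can be carried out; but fitting the correct ansatz and making the error terms manifestly controllable by the 2-convex inequality is the heart of the argument and the step most likely to require a technically delicate computation.
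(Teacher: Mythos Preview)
The paper does not prove this statement; it is recorded as a result from the literature, attributed to Chow \cite[Main Theorem 2]{ChowPositivity}. There is therefore no ``paper's own proof'' to compare against.

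Your sketch is in the spirit of how such deformation results are typically proved (cf.\ B\"ar--Hanke \cite{BarHankeBoundary} for the scalar curvature analogue), but as written it is not a proof, and you acknowledge as much. The genuine content of Chow's argument is precisely the step you flag as the obstacle: verifying that the PIC inequality survives for \emph{every} orthonormal 4-frame throughout the collar, including frames that mix the normal direction with tangential directions. Your schematic curvature formula and the heuristic ``choose $\phi''$ small and absorb errors'' do not by themselves establish this; one needs an explicit algebraic lemma showing that the PIC combination of the deformed curvature tensor is bounded below by a positive quantity that can be expressed in terms of the original PIC bound and the 2-convexity constant, uniformly over frames. Until that computation is actually carried out (and it is the substance of Chow's paper), what you have written is a plausible outline rather than a proof.
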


Finally, we conclude this section with a result about transferring the PIC condition on a Riemannian manifold with boundary to its boundary with the induced metric.

\begin{prop}\label{Propboundarygeodesic}
     Let $(X^{n+1},g)$ be a compact Riemannian manifold with boundary. If $n\geq4$ and $(X,g)$ has PIC and totally geodesic boundary, then the induced metric on $\partial X$ has PIC.
\end{prop}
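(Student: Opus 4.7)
The plan is to apply the Gauss equation in combination with the totally geodesic hypothesis to reduce the intrinsic PIC inequality on $\partial X$ to the ambient one on $X$. For the isometric embedding $\partial X \hookrightarrow X$ with second fundamental form $A$, the Gauss equation expresses the intrinsic Riemann $(0,4)$-tensor of $\partial X$ as the restriction of the ambient $(0,4)$-tensor of $X$ to tangent vectors, plus terms that are quadratic in $A$. Since $\partial X$ is totally geodesic in $X$, we have $A \equiv 0$ on all of $\partial X$, and hence $R^{\partial X}_{ijkl} = R^X_{ijkl}$ for every four-tuple of vectors tangent to $\partial X$, using the convention $R_{ijkl} = -g(R(e_i,e_j)e_k,e_l)$ adopted in the excerpt.

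To finish, I would fix any $p \in \partial X$ and any orthonormal four-frame $\{e_1, e_2, e_3, e_4\} \subset T_p(\partial X)$; such a frame exists because $\dim \partial X = n \geq 4$. Since $T_p(\partial X) \subset T_p X$ is a linear isometric inclusion, these vectors are simultaneously an orthonormal four-frame in $T_p X$. The ambient PIC hypothesis applied to this frame then yields
\[
R^X_{1313} + R^X_{1414} + R^X_{2323} + R^X_{2424} - 2\,R^X_{1234} \;>\; 0.
\]
The equality $R^{\partial X}_{ijkl} = R^X_{ijkl}$ for tangent four-tuples immediately converts this into the same strict inequality with $R^{\partial X}$ in place of $R^X$, which is precisely the PIC condition for the induced metric on $\partial X$. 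Since $p$ and the four-frame were arbitrary, the induced metric on $\partial X$ has PIC.

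There is no real obstacle here: the proposition is an immediate corollary of the Gauss equation once the totally geodesic hypothesis kills the quadratic-in-$A$ correction terms. The only point that warrants care is tracking the sign convention $\mathfrak{R}(e_i\wedge e_j, e_k\wedge e_l) = -g(R(e_i,e_j)e_k,e_l)$ so that the quadratic form appearing in \Cref{PIC def} matches term by term on the two sides of the identity $R^{\partial X}_{ijkl} = R^X_{ijkl}$.
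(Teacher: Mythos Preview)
The proposal is correct and follows essentially the same approach as the paper: both apply the Gauss equation, use the totally geodesic hypothesis to kill the $A$-terms so that the intrinsic and ambient curvature components agree on tangent four-tuples, and then read off the PIC inequality on $\partial X$ directly from the ambient one. Your write-up is in fact slightly more explicit than the paper's in noting that $n\geq 4$ is what guarantees the existence of an orthonormal four-frame tangent to $\partial X$.
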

\begin{proof}
    Let $(X^{n+1},g)$, $n\geq 4$, be a compact Riemannian $(n+1)$-manifold with boundary. Let $g_{\partial X}$ denote the induced metric on $\partial X$. The intrinsic Riemannian curvature tensor of $\partial X$ is denoted by $\mathrm{Riem}_{\partial X}$, and the second fundamental form of $\partial X$ with respect to the outward-pointing unit normal vector $\nu$ is denoted by $A_{\partial X}$.
 The Gauss--Codazzi equations state:
    \begin{align*}
        g(\mathrm{Riem}(U,V)Z,W)&=g_{\partial X}(\mathrm{Riem}_{\partial X}(U,V)Z,W)-A_{\partial X}(U,W)A_{\partial X}(V,Z)\\
        &\qquad+A_{\partial X}(U,Z)A_{\partial X}(V,W).
    \end{align*}
    As $\partial X$ is totally geodesic, then $g(\mathrm{Riem}(U,V)Z,W)=g_{\partial X}(\mathrm{Riem}_{\partial X}(U,V)Z,W)$ and in terms of the curvature operator 
    \begin{align*}
       \mathfrak{R}(U\wedge V, Z\wedge W)&= - g(\mathrm{Riem}(U,V)Z,W)= -  g_{\partial X}(\mathrm{Riem}_{\partial X}(U,V)Z,W) \\&= \mathfrak{R}_{\partial X}(U\wedge V, Z\wedge W), 
    \end{align*}
    where $\mathfrak{R}_{\partial X}$ is the curvature operator on $\partial X$. Since $(X,g)$ has PIC, we conclude by definition that the induced metric on $\partial X$ has PIC.
\end{proof}

\section{Proofs of Main Results}\label{proofs}
In this section, we will present a few auxiliary propositions and the proofs of the main results. We will begin with the proof of \Cref{Scalar5}.

\begin{proof}[Proof of \Cref{Scalar5}]
 Let $M^5$ be the interior of a compact, contractible 5-manifold with boundary $X$ such that $\pi_3(X,\partial X)=0$. Assume $M$ supports a complete Riemannian metric of uniformly positive scalar curvature. By \Cref{CorChodoshLiLiokomovich}, we have that $\partial X$ has a finite cover that is homotopy equivalent to $\bS^4$ or a connected sum of finitely many copies of $\bS^{3}\times \bS^1$. By \Cref{poincarehomology},  we have that $\partial X$ is an integral homology sphere. By \Cref{euler}, we know that a finite cover of an integral homology sphere is never homotopy equivalent to a connected sum of finitely many copies of $\bS^{3}\times \bS^1$. Therefore, $\bS^4$ is a finite cover of $\partial X$.
 
 Now, by \Cref{coversofhomologyspheres}, we know that $\partial X$ is a simply connected integral homology 4-sphere and so, by the Hurewicz Theorem, $\partial X$ is a homotopy 4-sphere. By Freedman \cite{FreedmanTopologyof4}, we see that $\partial X$ is homeomorphic to $\bS^4$. We conclude that $X$ is homeomorphic to the 5-disk by Milnor \cite[\S9 Proposition C]{MilnorLectures}. Therefore, $M$ is homeomorphic to $\bR^5$ and so is diffeomorphic to $\bR^5$ since $\bR^5$ has unique smooth structure up to diffeomorphism by Stallings \cite[Corollary 5.2]{StallingsThepiecewise}.
 \end{proof}

Next, we present four propositions that will be useful for the proofs of \Cref{T: CurvManwithBoundary} and \Cref{CurvContr}.

\begin{prop}\label{PropB1}
     Let $X^{n+1}$, $n=4$ or $n\geq 12$ be a compact, contractible $(n+1)$-manifold with boundary such that $X$ supports a Riemannian metric $g$ satisfying \emph{\ref{C1}}. Then $\partial X$ is diffeomorphic to the $n$-sphere.
\end{prop}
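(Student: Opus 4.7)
The plan is to reduce to the classification of closed PIC $n$-manifolds, apply it to the boundary, and then use the fact that $\partial X$ is an integer homology $n$-sphere to eliminate every summand except $\bS^n$.

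First, since $(X,g)$ has PIC and $2$-convex boundary, I would apply Chow's deformation theorem (\Cref{ChowTheoremdeform}) to obtain a new metric $\tilde g$ on $X$ with PIC and totally geodesic boundary. Then \Cref{Propboundarygeodesic} transfers the PIC condition to the induced metric on $\partial X$, producing a closed PIC $n$-manifold. \Cref{poincarehomology} meanwhile tells us $\partial X$ is an integer homology $n$-sphere, hence connected and orientable.

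Next I would invoke the dimension-specific classification recalled in \Cref{closedmanifoldsubsec}---Chen--Tang--Zhu when $n=4$, and Huang when $n\geq 12$---to conclude that
\[
\partial X \;\cong\; \bS^n \,\#\, \bigl(\#_{j=1}^{J} \bS^n/\Gamma_j\bigr) \,\#\, \bigl(\#_{k=1}^{K} (\bS^{n-1}\times\bR)/G_k\bigr),
\]
where each $\Gamma_j$ is a nontrivial finite group acting freely by isometries on $\bS^n$ (for $n=4$, $\Gamma_j=\bZ/2$ so the summand is $\bRP^4$), and each $G_k$ is a cocompact discrete subgroup of the isometry group of $\bS^{n-1}\times\bR$ acting freely. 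A short argument then shows $G_k$ is virtually infinite cyclic: projecting $G_k$ to the non-compact factor $\bR\rtimes\bZ/2$ of the isometry group yields a cocompact discrete subgroup, which is either $\bZ$ or the infinite dihedral group, and both are virtually $\bZ$; the kernel (lying inside the compact factor $O(n)$) is finite.

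The last step is a purely algebraic-topological sieve. The Mayer--Vietoris splitting $\tilde H_i(A\#B;\bZ)\cong \tilde H_i(A;\bZ)\oplus \tilde H_i(B;\bZ)$, valid for closed oriented $n$-manifolds and $0<i<n$, forces each summand in the decomposition to be itself an integer homology $n$-sphere. Each $(\bS^{n-1}\times\bR)/G_k$ has $H_1\cong G_k^{\mathrm{ab}}$, which is non-trivial by \Cref{PropVirtuallycyclic}, so $K=0$. Each $\bS^n/\Gamma_j$ is a finite quotient of $\bS^n$ with $n\geq 4$, and \Cref{coversofhomologyspheres} rules out its being a homology sphere unless $\Gamma_j$ is trivial; so $J=0$. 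Hence $\partial X$ is diffeomorphic to $\bS^n$. The only deep inputs here are Chow's deformation result and the PIC classifications in dimensions $4$ and $\geq 12$; once these are available, the remaining work is a clean bookkeeping exercise using tools already assembled in \Cref{prelims}.
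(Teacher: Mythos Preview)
Your proof is correct and follows essentially the same route as the paper: deform via Chow, transfer PIC to $\partial X$, apply the Chen--Tang--Zhu/Huang classification, and eliminate summands using that $\partial X$ is an integral homology sphere together with \Cref{PropVirtuallycyclic} and \Cref{coversofhomologyspheres}. The only differences are in two sub-arguments---you show $G_k$ is virtually infinite cyclic by projecting to the $\bR\rtimes\bZ/2$ factor of the isometry group rather than via the Milnor--\v{S}varc lemma and ends of groups as the paper does, and you run a single Mayer--Vietoris sieve rather than splitting into $n$ even versus $n$ odd---both valid alternatives; for your sieve, note that the $\tilde H_i$ splitting at $i=n-1$ needs each summand orientable, which holds because $\partial X$ is orientable and a connected sum is orientable only if every summand is.
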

\begin{proof}
 By \Cref{ChowTheoremdeform} and \Cref{Propboundarygeodesic}, we know that $\partial X$ is a closed $n$-manifold that supports a Riemannian metric with PIC. Moreover, by \cite[Main Theorem]{ChenTangZhuCompleteClassification} or \cite[Theorem 1.1]{HuangCompact}, we know that $\partial X$ is diffeomorphic to a connected sum of the form 
 \[
 \bS^n\#(\#_{j=1}^J \bS^n/\Gamma_j)\# (\#_{k=1}^K  (\bS^{n-1}\times \bR)/G_k),
 \]
 where each $G_k$ is a cocompact discrete
subgroup of the isometry group of the standard metric on $\bS^{n-1}\times\bR$ acting freely on $\bS^{n-1}\times\bR$ and each $\Gamma_j$ is a nontrivial finite subgroup of the isometry group of the standard metric on $\bS^{n}$ acting freely on $\bS^n$. Therefore, we have that the fundamental group is

\[
\pi_1(\partial X)= \Gamma_1*\cdots*\Gamma_J * G_1*\cdots *G_K.
\]

\begin{claim}
    The groups $G_k$ are virtually infinite cyclic.
\end{claim}   
\begin{proof}[Proof of Claim]
    Let $R_k=(\bS^3\times \bR)/G_k)$ and note $R_k$ is a closed, connected manifold. By the Milnor--\v Svarc lemma \cite{MilnorAnote,SvarcAvolume}, $\pi_1(R_k)=G_k$ is finitely generated and there exists a distance function $d_k$ on $G_k$ such that the metric space $(G_k, d_k)$ is quasi-isometric to $(\bS^3\times \bR, d_{\bS^3\times\bR})$, where $d_{\bS^3\times\bR}$ is the distance function induced from the standard metric on $\bS^3\times \bR$.  Now, $(\bS^3\times \bR, d_{\bS^3\times\bR})$ is quasi-isometric $(\bR, d_\bR)$ which is quasi-isometric to $(\bZ,d_\bZ)$, where both $d_\bR$ and $d_\bZ$ are the standard distance functions on $\bR$ and $\bZ$, respectively. Therefore, $G_k$ is a group with two ends \cite[Corollary 2.3]{BrickQuasi}. Finally, a group with two ends is virtually infinite cyclic \cite{Freudenthal, Hopf} (see also \cite[Lemma 4.1]{WallPoincare}).
\end{proof}
Now, by \Cref{PropVirtuallycyclic} we know that the abelianization of a virtually infinite cyclic group is non-trivial. However, by \Cref{poincarehomology}, we know that $\partial X$ is an integral homology sphere. Therefore, the abelianization of $\pi_1(\partial X)$ must vanish. Therefore, $K=0$.

Consider $\bS^n/\Gamma_j$ in the connected sum description of $\partial X$. We will now examine two cases when $n$ is even and when $n$ is odd.

\noindent\underline{{\textsc{Case 1:}}} If $n$ is even, then $\bS^n/\Gamma_j$ is homeomorphic to $\bR\bP^n$. Therefore, $\pi_1(\bS^n/\Gamma_j)=\bZ_2$ and $H_1(\bS^n/\Gamma_j)=\bZ_2$.  Since $\partial X$ is an integral homology sphere, we conclude that $J=0$ and $\partial X$ is diffeomorphic to the $n$-sphere.

\noindent\underline{{\textsc{Case 2:}}} Now for $n$ odd, we note that $\bS^n/\Gamma_j$ are compact, connected, and orientable. (The orientability of $\bS^n/\Gamma_j$ can be proven by noting that $\bS^n/\Gamma_j$ supports a Riemannian metric with positive sectional curvature and applying Synge’s theorem.) Therefore, for $2\leq i\leq n-1$,
\[
0=H_i(\partial X)=H_i(\bS^n/\Gamma_1)\oplus\cdots\oplus H_i(\bS^n/\Gamma_J).
\]
Thus, each $\bS^n/\Gamma_j$ is an integral homology sphere which is covered by the sphere. By the assumption $n=4$ or $n\geq 12$, \Cref{coversofhomologyspheres}, and the Hurewicz Theorem, we conclude that $\Gamma_j$ are trivial and $J=0$. Therefore, $\partial X$ is diffeomorphic to the $n$-sphere.

Thus, in either case, $\partial X$ is diffeomorphic to the $n$-sphere.
\end{proof}

\begin{prop}\label{PropB2}
    Let $X^{n+1}$, $n\in\{3,4\}$ be a compact, contractible manifold with boundary such that $X$ supports a Riemannian metric $g$ satisfying \emph{\ref{C2}}. Furthermore, if $n=4$, assume $\pi_3(X,\partial X)=0$. Then $\partial X$ is homeomorphic to the $n$-sphere.
\end{prop}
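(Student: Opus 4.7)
The plan is to reduce Proposition \ref{PropB2} to classification results for positive-scalar-curvature manifolds, after transferring the \emph{(C2)} hypothesis to the boundary via \Cref{pinchedRicci}.

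\emph{Common first step.} By \Cref{pinchedRicci}, the induced metric $g_{\partial X}$ on $\partial X$ has positive scalar curvature; by \Cref{poincarehomology}, $\partial X$ is an integral homology $n$-sphere.

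\emph{Case $n = 4$.} This case runs parallel to the proof of \Cref{Scalar5}. The hypothesis $\pi_3(X, \partial X) = 0$ combined with \Cref{boundaryconnectedness} gives $\pi_2(\partial X) = 0$. Then \Cref{dim45mapping}, applied to the identity map $\partial X \to \partial X$, yields a finite cover $\widetilde{\partial X}$ homotopy equivalent either to $\bS^4$ or to a connected sum of copies of $\bS^3 \times \bS^1$; the latter is excluded by \Cref{euler}, so $\widetilde{\partial X} \simeq \bS^4$. The Euler-characteristic identity $\chi(\widetilde{\partial X}) = d \cdot \chi(\partial X)$ becomes $2 = 2d$, forcing $d = 1$, so $\partial X$ is itself simply connected. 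A simply connected integral homology 4-sphere is a homotopy 4-sphere by Hurewicz plus Whitehead, hence homeomorphic to $\bS^4$ by Freedman.

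\emph{Case $n = 3$.} Here $\dim X = 4$, and my plan is to invoke the Chodosh--Máximo--Mukherjee theorem recalled in the introduction. Using that $g_{\partial X}$ has positive scalar curvature on the compact manifold $\partial X$ and that $\partial X$ is strictly convex in $X$, I would attach an infinite cylindrical end to $X$ along $\partial X$ carrying the product metric $g_{\partial X} + dt^2$ for large $t$, and perform a standard Gromov--Lawson-type smoothing across $\partial X \times \{0\}$, so that the resulting smooth metric on the open 4-manifold $M := \mathrm{int}(X)$ is complete and has uniformly positive scalar curvature. Since $M$ is homotopy equivalent to $X$, it is contractible, and $M$ is the interior of the compact contractible 4-manifold $X$. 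By the Chodosh--Máximo--Mukherjee theorem, $M$ is then homeomorphic to $\bR^4$. The end of $\mathrm{int}(X)$ deformation retracts onto $\partial X$, so the fundamental group at infinity of $M$ is $\pi_1(\partial X)$; on the other hand, $\bR^4$ has trivial fundamental group at infinity. Thus $\pi_1(\partial X) = 0$, and a simply connected integral homology 3-sphere is homeomorphic to $\bS^3$ by Perelman's resolution of the Poincaré conjecture.

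The main obstacle is the gluing step in the $n = 3$ case: producing a smooth, complete metric on $\mathrm{int}(X)$ with uniformly positive scalar curvature out of the data on $X$. The strict convexity of $\partial X$ is used to control the jump of the second fundamental form when passing from $X$ to the attached cylinder, so that the smoothing (whose singular curvature contribution at $\partial X$ comes with the correct sign) can be arranged to preserve a uniform positive lower bound on the scalar curvature. Once this is done, the remainder of the argument is purely topological.
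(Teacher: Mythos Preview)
Your $n=4$ argument is correct and essentially coincides with the paper's: both transfer PSC to $\partial X$ via \Cref{pinchedRicci}, use \Cref{boundaryconnectedness} and \Cref{dim45mapping}, rule out the $\bS^3\times\bS^1$ covers via \Cref{euler}, and conclude by Freedman. Your Euler-characteristic shortcut $2=\chi(\widetilde{\partial X})=d\cdot\chi(\partial X)=2d$ is a pleasant alternative to the paper's citation of Sjerve's theorem (\Cref{coversofhomologyspheres}) for the same step.

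For $n=3$ your route is genuinely different from the paper's, and more circuitous. The paper never builds a complete metric on $\mathrm{int}(X)$: once it knows that $\partial X$ is a PSC integral homology $3$-sphere bounding the contractible $4$-manifold $X$, it invokes directly the \emph{internal} results of Chodosh--M\'aximo--Mukherjee (their Corollary~2.3 and Proposition~4.2), whose proof runs as follows. Perelman's PSC classification forces $\partial X\cong\bS^3\#(\#_L P)\#(\#_M(-P))$ with $P$ the Poincar\'e sphere; the Heegaard--Floer $d$-invariant vanishes on boundaries of contractible $4$-manifolds and is additive under connected sum, giving $L=M$; and a theorem of Taubes then forces $L=M=0$. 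This yields $\partial X\cong\bS^3$ with no gluing step at all.

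Your approach is valid in outline, but the detour through the main CMM theorem buys nothing here, since that theorem is itself proved by first establishing $\partial X\cong\bS^3$ via exactly the argument above. You also take on the ``main obstacle'' you flag: producing a smooth, complete, uniformly PSC metric on $X\cup_{\partial X}(\partial X\times[0,\infty))\cong\mathrm{int}(X)$. This can indeed be done (the mean-curvature jump at the interface has the favorable sign since $\partial X$ is strictly convex in $X$ while the product slices are totally geodesic, so a Miao/Gromov--Lawson corner smoothing preserves a uniform lower scalar bound), but it needs a precise reference rather than the phrase ``standard Gromov--Lawson-type smoothing''. The fundamental-group-at-infinity step is fine. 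In short: correct, but the paper's direct $3$-manifold argument is both shorter and avoids the analytic gluing entirely.
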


\begin{proof}
    By \Cref{pinchedRicci}, we know that $\partial X$ supports a Riemannian metric with positive scalar curvature, and by \Cref{poincarehomology}, we have that $\partial X$ is an integral homology sphere.

    If $n=3$, we have by \cite[Corollary 2.3 and Proposition 4.2]{ChodoshMaximoMukherjeeComplete4} that $\partial X$ is diffeomorphic to the 3-sphere. We will quickly outline this proof. By Perelman \cite{PerelmanRicciFlowwithSurgery}, we know that $\partial X$ is a connected sum of the form 
\[
\bS^3\#(\#_{j=1}^J \bS^3/\Gamma_j)\# (\#_{k=1}^K  \bS^2\times \bS^1),
\]
where $\Gamma_j$ is a nontrivial finite subgroup of $\mathrm{SO}(4)$ acting freely on $\bS^3$. Therefore, since $\partial X$ is an integral homology sphere we have 
\[
\partial X \cong\bS^3\#(\#_{j=1}^JP)\#(\#_{k=1}^K -P),
\]
where $P$ represents the Poincar\'e homology sphere and $-P$ is $P$ with the opposite orientation. By applying the Heegard-Floer $d$-invariant \cite[Theorem 1.2, Proposition 4.2, Proposition 4.3, Section 8.1, and Proposition 9.9]{OzsvathSzaboAbsolutely} one concludes $L=M$. Then, by a theorem of Taubes \cite{TaubesGauge}, we conclude  $L=M=0$. Thus, $\partial X$ is $\bS^3$.

If $n=4$, then since $\pi_3(X,\partial X)=0$, we have by \Cref{boundaryconnectedness} that $\pi_2(\partial X)=0$. Thus, as $\partial X$ supports a Riemannian metric with positive scalar curvature, we deduce from \Cref{ClassifyChodoshLiLIokomovich} that a finite cover of $\partial X$ is homotopy equivalent to $\bS^4$ or a connected sum of finitely many copies of $\bS^{3}\times \bS^1$. By \Cref{euler}, this finite cover cannot be homotopy equivalent to a connected sum of finitely many copies of $\bS^{3}\times \bS^1$. Thus, $\partial X$ is covered by $\bS^4$ and is an integral homology sphere; therefore, by \Cref{coversofhomologyspheres}, we conclude that $\pi_1(\partial X)=0$. By the Hurewicz Theorem, $\partial X$ is a homotopy sphere, and by Freedman \cite{FreedmanTopologyof4} $\partial X$ is homeomorphic to $\bS^4$.
\end{proof}

\begin{prop}\label{PropC1}
     Let $X^{n+1}$, $n\in\{3,4,5\}$, be a compact $(n+1)$-manifold with boundary. Assume $X$ supports a Riemannian metric satisfying $\emph{\ref{C3}}$. Additionally:
\begin{enumerate}[label=\emph{(\alph*)}]
    \item If $n=4$, assume $\pi_3(X,\partial X)=0$. 
    \item If $n=5$, assume $\pi_3(X,\partial X)=\pi_4(X,\partial X)=0$.
\end{enumerate}
Then $\partial X$ homeomorphic to the $n$-sphere for $n\in\{3,4,5\}$; furthermore, when $n\in\{3,5\}$, the homeomorphism can be promoted to a diffeomorphism.
\end{prop}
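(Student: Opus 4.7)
The plan is to invoke Wang's theorem \cite{HHWangBoundaryConvexity} as a black box to first upgrade the hypotheses to contractibility of $X$, and then to run essentially the same argument as in the proof of \Cref{PropB2}. Condition \ref{C3} is precisely Wang's sufficient condition, so we obtain that $X$ is contractible; \Cref{poincarehomology} then gives that $\partial X$ is an integral homology $n$-sphere. In addition, \ref{C3} forces $\Ric_{\partial X}>0$ (strict positivity holds since the volume-normalized pinching constant $1-\delta(n)$ is positive), so $\partial X$ carries a Riemannian metric with positive scalar curvature, and the smallest-eigenvalue condition $\lambda>0$ tells us $\partial X$ is convex in $X$.

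With these data in hand, the proof proceeds by cases on $n$. For $n=3$, I would mirror the strategy of \cite[Corollary 2.3 and Proposition 4.2]{ChodoshMaximoMukherjeeComplete4} used in \Cref{PropB2}: the classification of closed 3-manifolds of positive scalar curvature due to Perelman, Gromov--Lawson, and Schoen--Yau combined with the homology sphere hypothesis forces $\partial X$ to be a connected sum of copies of the Poincar\'e homology sphere $P$ and its orientation reversal $-P$; the Heegaard--Floer $d$-invariant of Ozsv\'ath--Szab\'o \cite{OzsvathSzaboAbsolutely} pairs the number of copies of $P$ with the number of copies of $-P$, and Taubes's theorem \cite{TaubesGauge} then shows both counts must vanish. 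Hence $\partial X$ is diffeomorphic to $\bS^3$.

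For $n=4$ and $n=5$, I plan to use the extra vanishing hypotheses on the relative homotopy groups together with \Cref{boundaryconnectedness} to deduce $\pi_2(\partial X)=0$, and also $\pi_3(\partial X)=0$ when $n=5$. Then, taking the identity as a map of non-zero degree, \Cref{dim45mapping} produces a finite cover $\widehat{\partial X}$ homotopy equivalent either to $\bS^n$ or to a connected sum of finitely many copies of $\bS^{n-1}\times\bS^1$. The second option is ruled out by \Cref{euler} since $\partial X$ is an integral homology sphere. Thus $\widehat{\partial X}\simeq\bS^n$ is simply connected, so \Cref{coversofhomologyspheres} forces the covering map to be the identity and $\partial X$ is itself simply connected; the Hurewicz theorem then identifies $\partial X$ with a homotopy $n$-sphere. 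Freedman's theorem \cite{FreedmanTopologyof4} yields the homeomorphism with $\bS^4$ when $n=4$, whereas for $n=5$ Smale's smooth Poincar\'e conjecture upgrades the homotopy equivalence to a diffeomorphism with $\bS^5$, which matches the refinement stated in \Cref{r: CurvContr}.

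The argument is largely a reassembly of machinery already quoted in the preliminaries, so there is no single hard step. The main subtlety to check is that \ref{C3} really does trigger Wang's theorem with the same constant $\delta(n)$, and that the lower bounds on $\Ric_{\partial X}$ and on the second fundamental form extracted from \ref{C3} are enough to supply positive scalar curvature on $\partial X$ so that \Cref{dim45mapping} applies; once this bookkeeping is checked, the dimension-by-dimension topological endgame proceeds exactly as in \Cref{PropB2}.
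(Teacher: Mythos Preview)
Your overall strategy is sound, and for $n=3,4$ it is essentially equivalent to the paper's proof (the paper uses Hamilton's positive Ricci theorem in the $n=3$ case rather than the full PSC classification, but both routes land on the $d$-invariant/Taubes endgame). However, there is a genuine gap in your $n=5$ argument.

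You invoke \Cref{euler} to rule out the possibility that a finite cover of $\partial X$ is homotopy equivalent to a connected sum of copies of $\bS^{n-1}\times\bS^1$. But \Cref{euler} is stated and proved only for integral homology $2n$-spheres, i.e., in \emph{even} dimensions, where $\chi(\bS^{2n})=2$ drives the contradiction. When $n=5$, the boundary $\partial X$ is a $5$-manifold, so $\chi(\partial X)=0$, and likewise $\chi\bigl(\#_k(\bS^4\times\bS^1)\bigr)=0$; the Euler characteristic gives no information and the lemma simply does not apply.

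The paper closes this gap using the extra geometric input you already recorded but did not exploit: condition \ref{C3} gives $\Ric_{\partial X}>0$, so by Bonnet--Myers $\pi_1(\partial X)$ is finite. Hence no finite cover of $\partial X$ can be homotopy equivalent to a connected sum of copies of $\bS^{n-1}\times\bS^1$, since such a manifold has infinite fundamental group. With this correction your argument goes through for $n=5$, and the remainder (Sjerve, Hurewicz, Smale) is exactly as you wrote.
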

\begin{proof}
    From \cite{HHWangBoundaryConvexity}, we know that $X$ is contractible, and from \ref{C3} we know that the induced metric on $\partial X$ has positive Ricci curvature. 

When $n=3$, by Hamilton \cite{HamiltonThreeManifoldPosRic}, we know that $\partial X$ is a quotient of the 3-sphere.
By \Cref{poincarehomology}, we know that the abelianization of $\pi_1(\partial X)$ is trivial. Therefore, by \Cref{coversofhomologyspheres}, $\partial X$ is homeomorphic to either $\bS^3$ or the Poincar\'e homology sphere. Now, we know that the Heegard-Floer $d$-invariant, $d(\partial X)$, vanishes since $\partial X$ is the boundary of a contractible manifold \cite[Theorem 1.2 and Proposition 9.9]{OzsvathSzaboAbsolutely}. However, the $d$-invariant of the Poincar\'e homology sphere is non-trivial \cite[Section 8.1]{OzsvathSzaboAbsolutely}. Thus, $\partial X$ is diffeomorphic to $\bS^3$.

 When $n\in \{4,5\}$, the topological assumptions, \emph{\ref{assumption1} \ref{assumption2}}, imply via \Cref{boundaryconnectedness} that $\partial X$ satisfies the hypotheses of \Cref{ClassifyChodoshLiLIokomovich}; therefore, a finite cover of $\partial X$ is homotopy equivalent to $\bS^n$ or a connected sum of finitely many copies of $\bS^{n-1}\times\bS^1$. As $\partial X$ supports a Riemannian metric with positive Ricci curvature, we conclude that a finite cover of $\partial X$ cannot be homotopy equivalent to a connected sum of finitely many copies of $\bS^{n-1}\times\bS^1$ (since $\pi_1(\bS^{n-1}\times\bS^1)$ is not finite). Therefore, a finite cover of $\partial X$ is homotopic to $\bS^n$. When $n=4$, we conclude that $\partial X$ is homeomorphic to $\bS^4$ by Freedman \cite{FreedmanTopologyof4} and when $n=5$ that $\partial X$ is diffeomorphic to $\bS^5$ by \cite[\S 9 Proposition B]{MilnorLectures} (see also Smale \cite{SmaleGeneralized, SmaleN}).
\end{proof}

Next, we would like to include a proof that justifies \Cref{r: Ric4} and \Cref{r: CurvContr} --- a Mazur manifold whose boundary is $\bS^3$ is diffeomorphic to the 4-disk. We give the proof from \cite[page 10]{ChodoshMaximoMukherjeeComplete4}.
\begin{prop}\label{PropMazur}
    A Mazur manifold $X$ whose boundary is $\bS^3$ is diffeomorphic to the 4-disk.
\end{prop}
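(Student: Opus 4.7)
The plan is to combine the handle-theoretic description of a Mazur manifold with Gabai's Property~R theorem to produce a geometric cancellation of the $1$- and $2$-handles.

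By definition, $X$ admits a handle decomposition
\[
X = h^{0} \cup h^{1} \cup h^{2}
\]
with exactly one $0$-, $1$-, and $2$-handle. After attaching the $0$- and $1$-handles one obtains $h^{0} \cup h^{1} \cong \bS^{1} \times D^{3}$, whose boundary is $\bS^{1} \times \bS^{2}$. The $2$-handle $h^{2}$ is then attached along a framed knot $K \subset \bS^{1} \times \bS^{2}$, and contractibility of $X$ forces the homotopy class $[K]$ to generate $\pi_{1}(\bS^{1} \times \bS^{2}) \cong \bZ$.

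Next, I would translate the hypothesis $\partial X \cong \bS^{3}$ into a Dehn surgery statement. The boundary $\partial X$ is obtained from $\bS^{1} \times \bS^{2}$ by integral Dehn surgery on $K$ with the prescribed framing. Reversing this surgery exhibits $\bS^{1} \times \bS^{2}$ as integral Dehn surgery on a dual knot $K^{\ast} \subset \bS^{3}$. By Gabai's Property~R theorem, the only knot in $\bS^{3}$ whose integral surgery yields $\bS^{1} \times \bS^{2}$ is the unknot with $0$-framing. Dualizing back, $K$ must be isotopic in $\bS^{1} \times \bS^{2}$ to the standard circle $\{\mathrm{pt}\} \times \bS^{1}$ carrying the $0$-framing.

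Once $K$ is identified with this standard framed circle, the attaching sphere of $h^{2}$ meets the belt sphere of $h^{1}$ transversely in exactly one point. The $2$-handle therefore geometrically cancels the $1$-handle, and $X$ is smoothly diffeomorphic to the remaining $0$-handle $D^{4}$. The main obstacle is the careful bookkeeping of framings required to invoke Property~R through the surgery/dual-surgery correspondence; once the identification of $K$ with the standard framed circle is in hand, standard handle-cancellation theory completes the proof.
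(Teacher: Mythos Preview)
Your proposal is correct and follows essentially the same approach as the paper: invoke Gabai's Property~R to identify the attaching knot of the $2$-handle with the $\bS^{1}$ factor of $\bS^{1}\times\bS^{2}$, then apply the smooth handle-cancellation lemma. One small slip: the standard circle should be written $\bS^{1}\times\{\mathrm{pt}\}$ rather than $\{\mathrm{pt}\}\times\bS^{1}$.
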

\begin{proof}
   Recall that a Mazur manifold $X$ is a compact, contractible 4-manifold with boundary admitting a (smooth) handle decomposition with one 0-handle, one 1-handle, and one 2-handle. Observe that the 2-handle is attached along a knot on the boundary of the 1-handle which is $\bS^1\times\bS^2$. Thus, $\partial X$ which is homeomorphic to $\bS^3$ is obtained by performing surgery along a knot $K$ in $\bS^1\times\bS^2$. Gabai’s property R theorem \cite{GabaiFoliations} states that $K$ is smoothly isotopic to the $\bS^1$ factor of $\bS^1\times\bS^2$. In the 4-dimensional handle picture, the attaching sphere of the 2-handle intersects the belt sphere of the 1-handle geometrically once, allowing us to cancel the 1- and 2-handle smoothly. We conclude $X$ must be diffeomorphic to the 4-disk.
\end{proof}

Finally, we present the proofs of \Cref{T: CurvManwithBoundary} and \Cref{CurvContr}.

\begin{proof}[Proof of \Cref{T: CurvManwithBoundary}]\label{Proof of A}
Let $X^{n+1}$ is a compact, contractible manifold with boundary.

Assume \emph{\ref{B(i)}} holds, then by \Cref{PropB1}, $\partial X$ is diffeomorphic to the $n$-sphere. By Milnor \cite[\S9 Proposition A and Proposition C]{MilnorLectures} (see also Smale \cite{SmaleGeneralized, SmaleN}), we conclude that $X$ is diffeomorphic to the $(n+1)$-disk.

Assume \emph{\ref{B(ii)}} holds, then by \Cref{PropB2}, $\partial X$ is homeomorphic to the $n$-sphere. When $n=3$, we conclude, by Freedman \cite{FreedmanTopologyof4}, that $X$ is homeomorphic to the $4$-disk. When $n=3$ and $X$ is a Mazur manifold, then $X$ is diffeomorphic to the $4$-disk by \Cref{PropMazur}. When $n=4$, $X$ is homeomorphic to the 5-disk, by \cite[\S 9 Proposition C]{MilnorLectures}. 
\end{proof}

\begin{proof}[Proof of \Cref{CurvContr}]
By \Cref{PropC1}, we know that $\partial X$ is homeomorphic to the $n$-sphere. 
 When $n=3$, we conclude, by Freedman \cite{FreedmanTopologyof4}, that $X$ is homeomorphic to the $4$-disk. When $n=3$ and $X$ is a Mazur manifold, then $X$ is diffeomorphic to the $4$-disk by \Cref{PropMazur}. When $n=4$, $X$ is homeomorphic to the $5$-disk, by \cite[\S 9 Proposition C]{MilnorLectures}. When $n=5$, we conclude that $X$ is diffeomorphic to the $6$-disk, by \cite[\S 9 Proposition A]{MilnorLectures} (see also Smale \cite{SmaleGeneralized, SmaleN}).
\end{proof}

\printbibliography
\end{document}